\newtheorem{theorem}{Theorem}[section]
\newtheorem{lemma}{Lemma}[section]
\newtheorem{proposition}{Proposition}[section]
\newtheorem{remark}{Remark}[section]
\def\mc{\mathcal}
\def\mf{\mathfrak}
\def\ms{\mathscr}
\def\a{\alpha}
\def\t{\theta}
\def\d{\delta}
\def\g{\gamma}
\def\l{\lambda}
\def\p{\partial}
\def\e{\varepsilon}
\def\v{\varphi}
\def\k{\kappa}
\title{\vskip-2.5cm
	{High multiplicity and global structure of coexistence states in a predator-prey model with saturation}
	\thanks{This paper has been written under the auspices of the Ministry of Science and Innnovation of Spain under Research Grants PID2021-123343NB-I00 and PID2024-155890NB-I00, and  the Institute of Interdisciplinary Mathematics of Complutense University of Madrid. K. Kuto has been partially supported by JSPS KAKENHI Grant Number JP25K00917.} }
\author{
	\sc Kousuke Kuto
	\\
	\small Waseda University
	\\
	\small Department of Applied Mathematics
	\\
	\small 3-4-1 Okubo, Shinjuku-ku, Tokyo 169-8555, Japan
	\\
	\small E-mail: {\tt kuto@waseda.jp }
	\medskip
	\\
	\sc Juli\'an L\'opez-G\' omez and Eduardo Mu\~{n}oz-Hern\'andez
	\\
	\small Universidad Complutense de Madrid
	\\
	\small Instituto de Matem\'{a}tica Interdisciplinar (IMI)
	\\
	\small Departamento de An\'alisis Matem\'atico y Matem\'atica
	Aplicada
	\\
	\small  Plaza de las Ciencias 3, 28040   Madrid, Spain
	\\
	\small E-mails: {\tt   julian@mat.ucm.es} and {\tt eduardmu@ucm.es}
	\bigskip
}
\begin{document}
	\maketitle
\begin{abstract}
This paper  establishes that, under the appropriate range of values of the parameters involved in 
the formulation of the model, a diffusive predator-prey system with saturation can have an arbitrarily large number of coexistence states for sufficiently large saturation rates. 
Moreover, it ascertains the global structure of the set of coexistence states in the limiting system as the saturation rate blows-up.

\vspace{1cm}
\noindent{\it MSC Numbers}: 35J57, 92D40, 34C23, 70K05
\par
\noindent{\it Keywords}: Predator-prey systems of Holling--Tanner type; Multiplicity of coexistence states; Bifurcation Theory; Stability and Phase plane analysis
\end{abstract}

\section{Introduction}\label{sec1}

\noindent This paper studies the existence, multiplicity and global structure of the coexistence states of the following diffusive one-dimensional Holling--Tanner predator-prey model with Neumann boundary conditions
\begin{equation}
\label{1.i}
\left\{
\begin{array}{ll}
-u''=\lambda u - a(x)u^2 - b\frac{uv}{1+ \g u}\qquad \hbox{in}\;\;(0,1),\\[6pt]
-v''=\mu v -dv^2+ c(x)\frac{uv}{1+ \g u}\qquad\,\hbox{in}\;\;(0,1),\\[7pt]
u'(0)=u'(1)=0,\;\; v'(0)=v'(1)=0,
\end{array}
\right.	
\end{equation}
where $a,c\in\mc{C}([0,1];\mathbb{R})\setminus\{0\}$ are non-negative, $b>0$ and $d>0$ are positive constants, and $\l$, $\mu$ and $\g>0$ are viewed as real parameters. This model is a special one-dimensional prototype  of the generalized heterogeneous  diffusive Holling--Tanner model introduced by two of the authors in  \cite{LGMH20} and \cite{LGMH24}, whose simplest prototypes go back to Blat and Brown \cite{BB86} and Casal et al. \cite{CEL}, where some necessary and sufficient conditions for the existence of coexistence states was established in its multidimensional counterpart under Dirichlet boundary conditions. The limiting problem when $\g=0$ provides us with a classical diffusive predator-prey model of Lotka--Volterra type. In Population Dynamics, $\g>0$ represents the saturation rate of the predator population, whose density is measured by $v$, in the presence of prey abundance, whose density is measured by $u$. The coexistence states are the componentwise positive solutions, $(u,v)$, of \eqref{1.i}. According to the strong maximum principle, they satisfy $u\gg 0$ and $v\gg 0$ in the sense that $u(x)>0$ and $v(x)>0$ for all $x\in [0,1]$ 
(see Chapters 1 and 2 of \cite{LG13}, if necessary). 
\par
\par
Whereas in the special case when $\g=0$ the set of values of the parameters $(\l,\mu)$ where \eqref{1.i} has a coexistence state coincides with the region where both semitrivial positive solutions are linearly unstable, when $\g>0$, \eqref{1.i} can have coexistence states even in the region where  one of the semitrivial positive solutions becomes linearly stable. Actually, in this region one might have multiple coexistence states, as documented numerically by Casal et al. \cite{CEL}. The rigorous existence of an $S$-shaped bifurcation diagram of coexistence states was established later by Du and Lou \cite{DL98} 
and two of the authors in \cite{LGMH24}. A similar $S$-shaped bifurcation diagram has also been obtained in a quasilinear predator-prey steady-state model with cross-diffusion,
which, after a suitable change of variables, is reduced to a system resembling the Holling–Tanner type, as shown by Kuto and Yamada \cite{KutoYamada2004} (see also \cite{Kuto2004}).

\par
The multiplicity results in the Holling--Tanner model \eqref{1.i} contrast dramatically with the
one-dimensional uniqueness theorem of L\'opez--G\'omez and Pardo  \cite{LP93}, \cite{LP98}. Based on this uniqueness  theorem, it was shown in \cite{CEL} and \cite{LGMH20} that \eqref{1.i} has a unique coexistence state for sufficiently small $\g>0$.
\par
The main goal of this paper is to show that \eqref{1.i} can have an arbitrarily large number of coexistence states for sufficiently large $\g>0$. This result sharpens,  substantially, a previous finding of Du and Lou \cite{DL01} for a multidimensional counterpart of \eqref{1.i}, where the existence of two non-constant positive coexistence states was established via variational methods. 
\par
Precisely,  by introducing the change of variables
\begin{equation}
\label{1.ii}
w:=\g \,u,\qquad \varepsilon=\frac{1}{\g},
\end{equation}
\eqref{1.i} can be equivalently expressed as
\begin{equation}
\label{1.iii}
\left\{
\begin{array}{ll}
-w''=\lambda w - \varepsilon a(x)w^2 - b\frac{wv}{1+w}\qquad \hbox{in}\;\;(0,1),\\[6pt]
-v''=\mu v -dv^2+ \varepsilon c(x)\frac{wv}{1+w}\qquad\,\,\,\,\hbox{in}\;\;(0,1),\\[7pt]
w'(0)=w'(1)=0,\;\; v'(0)=v'(1)=0.
\end{array}
\right.	
\end{equation}
Thanks to \eqref{1.ii}, analyzing \eqref{1.i} as $\g\uparrow+\infty$ is equivalent to study \eqref{1.iii} as $\e\downarrow0$. Moreover, it is natural to think of \eqref{1.iii} as a perturbation of the underlying limiting system obtained from \eqref{1.iii} by switching off to zero the parameter $\e$ as follows
$$
\left\{
\begin{array}{ll}
-w''=\lambda w- b\frac{wv}{1+w}\qquad \hbox{in}\;\;(0,1),\\[6pt]
-v''=\mu v -dv^2\qquad\quad\,\hbox{in}\;\;(0,1),\\[7pt]
w'(0)=w'(1)=0,\;\; v'(0)=v'(1)=0.
\end{array}
\right.	
$$
Thus, as this limiting system is uncoupled, and $v= \frac{\mu}{d}$ is the unique positive solution of
the $v$-equation, which exists if and only if $\mu>0$, this paper will focus special attention into the positive solutions of the problem
\begin{equation}
\label{1.iv}
\left\{
\begin{array}{ll}
-w''=\lambda w- \frac{b\mu}{d}\frac{w}{1+w}\qquad \hbox{in}\;\;(0,1),\\[6pt]
w'(0)=w'(1)=0,
\end{array}
\right.	
\end{equation}
with $\mu>0$ and $\l>0$, since $w=0$ is the unique non-negative solution of \eqref{1.iv} if
$\l\leq0$.  Indeed, suppose that \eqref{1.iv} admits a positive solution, $w$. Then, $w(x)>0$
for all $x\in [0,1]$. Moreover, there exists $x_m\in [0,1]$ such that
$$
   w(x_m)= \max_{x\in [0,1]}w(x).
$$
Necessarily, $w''(x_m)\leq 0$. Thus,
$$
  \l - \frac{b\mu}{d}\frac{1}{1+w(x_m)}\geq 0,
$$
which implies $\l>0$. Therefore, throughout this paper we will assume that
\begin{equation}
\label{1.5}
   \l>0\quad\hbox{and}\quad \mu>0.
\end{equation}
Since \eqref{1.iv} is an autonomous second order problem, phase plane analysis and time-map techniques can be used to get the next result, which is our main multiplicity theorem for \eqref{1.iv}. Throughout this paper,
we set
\begin{equation*}
 \mu_\k:=\frac{d}{b}(2\k\pi)^2,\qquad\k\in\mathbb{N}\cup\{0\}.
\end{equation*}

\begin{theorem}
\label{th1.1}
Suppose  $\mu\in (0,\mu_1]$. Then, the constant
$$
   w_0:=\frac{b\mu}{d\l}-1
$$
is the unique positive solution of \eqref{1.iv}.
\par
Suppose  $\mu\in (\mu_\k,\mu_{\k+1}]$ for some integer $\k \geq 1$. Then,
the set of positive solutions of
\eqref{1.iv} consists of the constant solution $w_0$ plus $\k$ closed nested loops, $\mathscr{C}_n$,
$n\in\{1,...,\k\}$, bifurcating from $w_0$ at a family of critical values $\l_{n}^\pm\equiv \l_{n}^\pm(\mu)$ such that
\begin{equation*}
0<\l_{1}^-<\l_{2}^-<\cdots<\l_{\k}^-<\tfrac{b\mu}{2d}<\l_{\k}^+<\cdots<\l_{2}^+
<\l_{1}^+<\tfrac{b\mu }{d},
\end{equation*}
as illustrated in Figure \ref{Fig4} for $\k=2$. Moreover, for every
$n\in\{1,...,\k\}$,
$$
  \mathcal{P}_\l (\mathscr{C}_n)=[\l_{n}^-,\l_{n}^+],
$$
where $\mc{P}_\l$ denotes the $\l$-projection operator,
$$
   \mc{P}_\l(\l,w)=\l,
$$
and, for every  $\l \in (\l_{n}^-,\l_{n}^+)$, 
$\mathscr{C}_n$ consists of two positive solutions, $(\lambda, w_n)$ and $(\lambda, \tilde w_n)$, where $w_n$ and $\tilde w_n$ are the only positive solutions of \eqref{1.iv}, $w$, such that $w-w_0$ has $n$ zeros in $(0,1)$. 
Therefore, \eqref{1.iv} has exactly $2\k+1$ positive solutions for each $\l\in (\l_{\k}^-,\l_{\k}^+)$.
\par
Furthermore, 
the loop $\mathscr{C}_\k$ bifurcates from $\mathscr{C}_0$ at the single point
$$
(\mu,\l,w_0)=\big(\mu_\k,\tfrac{b\mu}{2d},1\big)
$$
as $\mu$ crosses $\mu_\k$, and its existence interval, $\l\in (\l_{\k}^-,\l_{\k}^+)$, is increasing with respect to $\mu$.
\end{theorem}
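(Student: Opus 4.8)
The plan is to treat \eqref{1.iv} as a scalar autonomous boundary-value problem and exploit the classical time-map / phase-plane machinery for Neumann problems. First I would rewrite the equation as $-w'' = f(w)$ with $f(w) := \l w - \frac{b\mu}{d}\,\frac{w}{1+w}$, defined for $w>-1$, and observe that under \eqref{1.5} the positive zeros of $f$ are exactly $w=0$ (discarded, since we want $w\gg0$) and $w=w_0 = \frac{b\mu}{d\l}-1$, which is positive precisely when $\l<\frac{b\mu}{d}$; for $\l\ge\frac{b\mu}{d}$ one checks $f(w)>0$ for $w>0$, hence no positive Neumann solution exists and $w_0\le 0$ is irrelevant. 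So assume $0<\l<\frac{b\mu}{d}$. The constant $w_0$ is always a solution. Any non-constant positive solution must, by the Neumann conditions and autonomy, oscillate monotonically between a minimum value $\alpha$ and a maximum value $\beta$ with $-1<\alpha<w_0<\beta$ (the level set of the first integral $\tfrac12(w')^2 + F(w)$, $F' = f$, through the turning points), is symmetric and periodic, and its half-period must be $1/n$ for some $n\in\mathbb N$. Thus the whole analysis reduces to computing the \emph{minimal period} $T(\alpha)$ of the periodic orbit through $(\alpha,0)$ as a function of $\alpha\in(-1,w_0)$ — equivalently as a function of the energy — via the time-map integral
\begin{equation*}
T(\alpha) \;=\; \sqrt{2}\int_{\alpha}^{\beta(\alpha)} \frac{dw}{\sqrt{F(\beta(\alpha))-F(w)}},
\end{equation*}
and then counting, for each $n$, the solutions of $T(\alpha) = 2/n$.

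The key qualitative input is the behaviour of $T(\alpha)$ at the two ends of its interval and its monotonicity structure. As the orbit shrinks to the equilibrium $w_0$ (i.e. $\alpha\uparrow w_0$), the period converges to the linearized period $\frac{2\pi}{\sqrt{f'(w_0)}}$; a computation gives $f'(w_0) = \l\bigl(1 - \tfrac{d\l}{b\mu}\bigr) = \l - \tfrac{d\l^2}{b\mu}$, so $T(w_0^-) = \frac{2\pi}{\sqrt{\l - d\l^2/(b\mu)}}$. This function of $\l$ on $(0,\frac{b\mu}{d})$ is U-shaped, blowing up at both endpoints and attaining its minimum $\frac{2\pi}{\sqrt{b\mu/(4d)}} = \frac{4\pi}{\sqrt{b\mu/d}}$ at $\l = \frac{b\mu}{2d}$; the condition $T(w_0^-) < \tfrac 2n$ (so that orbits of period exactly $\tfrac2n$ exist near $w_0$) translates into $\mu > \mu_n$ and pins down the bifurcation values $\l_n^\pm$ as the two roots of $\l - d\l^2/(b\mu) = (n\pi)^2$, which produces the ordering $0<\l_1^-<\dots<\l_\k^-<\frac{b\mu}{2d}<\l_\k^+<\dots<\l_1^+<\frac{b\mu}{d}$ and, when $\mu = \mu_\k$, the coalescence of $\l_\k^\pm$ at $\frac{b\mu}{2d}$ with $w_0 = 1$. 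At the other end, as $\alpha\downarrow -1$ the orbit approaches the singular boundary $w=-1$ of $f$; here I expect $T(\alpha)$ to tend to $+\infty$ (the potential well becomes infinitely wide because $F(w)\to+\infty$ as $w\downarrow-1$ while the relevant comparison shows the integral diverges), and more importantly I expect $T$ to be \emph{strictly monotone} — decreasing from $+\infty$ as $\alpha$ increases from $-1$, down to the limiting value $T(w_0^-)$. If that monotonicity holds, then for each $n$ with $(n\pi)^2 < \max_{\l} f'(w_0)\cdot(\text{appropriate scaling})$ — i.e. for $\mu>\mu_n$ — the equation $T(\alpha)=2/n$ has exactly one root $\alpha_n$, giving exactly one unordered pair $\{w_n,\tilde w_n\}$ (the two solutions being translates/reflections of each other realizing the two phases compatible with Neumann data), with $w_n-w_0$ having exactly $n$ interior zeros; summing over $n\in\{1,\dots,\k\}$ gives the $2\k$ non-constant solutions, hence $2\k+1$ in total on $(\l_\k^-,\l_\k^+)$. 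The loops $\mathscr C_n$ are then the curves $\l\mapsto(\l,w_n(\cdot;\l))$ closing up at $\l_n^\pm$ onto $w_0$, their nestedness and the identities $\mathcal P_\l(\mathscr C_n)=[\l_n^-,\l_n^+]$ being read off from the same picture.

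The main obstacle, as usual with time-map arguments, is the \textbf{monotonicity of $T(\alpha)$} in the energy (or in $\alpha$): the naive differentiation under the integral sign leads to a delicate expression, and the standard sufficient conditions (e.g. Chicone's convexity criterion, or Schaaf's criterion that $\bigl(\tfrac{F}{(f)^2}\bigr)$ or $\bigl(\tfrac{2F-wf}{f^2}\bigr)$-type quantities be monotone) must be verified for this specific $f$. I would first try the substitution $w = w_0 + s$ and the ratio $\frac{F(w_0+s)}{s^2}$, then fall back on the classical criterion that $T$ is monotone when $f''$ does not change sign together with a sign condition on $N(w) := f(w)f''(w) - \tfrac{?}{} (f'(w))^2$; here $f''(w) = \frac{2b\mu/d}{(1+w)^3} > 0$ for all $w>-1$, i.e. $f$ is strictly convex on its whole domain, which is exactly the situation in which the time map is strictly monotone in the energy (Chicone), so I am fairly confident this step goes through, though writing it cleanly for the off-equilibrium orbit between $\alpha$ and $\beta$ requires care. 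The remaining pieces — the linearized period computation, the endpoint limit $\alpha\downarrow-1$, the zero-counting of $w-w_0$, and the monotone dependence of $\l_\k^\pm$ and of the interval $(\l_\k^-,\l_\k^+)$ on $\mu$ (immediate from differentiating $\l - d\l^2/(b\mu) = (\k\pi)^2$ in $\mu$) — are routine once monotonicity is in hand.
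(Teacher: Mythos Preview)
Your overall strategy --- phase-plane reduction, time map $T$, linearized period at $w_0$, monotonicity of $T$ --- is exactly the paper's route, but your endpoint analysis contains a genuine error that would break the existence half of the argument. You parametrize the closed orbits by a minimum $\alpha\in(-1,w_0)$ and claim $T(\alpha)\to+\infty$ as $\alpha\downarrow-1$ because ``$F(w)\to+\infty$ as $w\downarrow-1$''. Both assertions are false. A direct check gives
\[
F(w)=\tfrac{\l}{2}w^2-\tfrac{b\mu}{d}\bigl[w-\ln(1+w)\bigr]\longrightarrow -\infty\quad\text{as }w\downarrow-1,
\]
so $w=0$ is a local \emph{maximum} of $F$ (indeed $F''(0)=\l-\tfrac{b\mu}{d}<0$), $(0,0)$ is a \emph{saddle} of the planar system, and the orbits through $(\alpha,0)$ with $\alpha\in(-1,0)$ are not closed at all: they run off to the singularity. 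More to the point, you want \emph{positive} solutions of \eqref{1.iv}, so the minimum must satisfy $w_->0$; the correct domain of the time map is $w_-\in(0,w_0)$. The reason $T(w_-)\to+\infty$ as $w_-\downarrow 0$ is not any singularity of $f$ but the fact that the energy level $\mathscr{E}=F(0)=0$ is the \emph{homoclinic loop} of the saddle $(0,0)$ enclosing the center $(w_0,0)$, and the periodic orbits accumulate on it. This is precisely \eqref{2.16} in the paper. With this correction, your outline (combine $T(0^+)=+\infty$, Lemma~\ref{le2.2} for $T(w_0^-)$, and strict monotonicity of $T$) recovers the paper's proof verbatim.

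On the monotonicity step: ``$f$ convex $\Rightarrow$ $T$ monotone'' is not Chicone's criterion (Chicone requires convexity of $F/f^2$, a different and here no easier computation), so your confidence there is misplaced. The paper settles this by verifying directly that $f(w)=\l w-\tfrac{b\mu}{d}\tfrac{w}{1+w}$ is an A--B function in Schaaf's sense on $[0,w_h]$: splitting at the unique zero $\sqrt{b\mu/(d\l)}-1$ of $f'$, one checks the $A$-inequality $f'f'''-\tfrac{5}{3}(f'')^2<0$ above it and the $B$-inequality $ff''-3(f')^2\le 0$ below it (both are short computations using $f''(w)=\tfrac{2b\mu}{d(1+w)^3}>0$), and then invokes Schaaf's theorem to get $T'(w_-)<0$ on $(0,w_0)$.
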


\noindent  As it will become apparent in Section 2, $\l_{\k}^{\pm}$ are the real roots of the eigencurves of the linearized problem at the constant solution $w_0$.
\par
The second aim of this paper is to show that the high multiplicity result established by Theorem \ref{th1.1}
is mimicked by the perturbed problem \eqref{1.iii} for sufficiently small $\e>0$, except, at most, at finitely many values of the parameter $\l$.   This is an extremely delicate issue based on the non-degeneration of
the coexistence states of \eqref{1.iii} for $\e=0$, which relies on the structure of the solution
set of \eqref{1.iv} and the analyticity of the eigenvalues of the associated variational problems. As a consequence of their non-degeneration, the Implicit Function Theorem shows that \eqref{1.iii} has an arbitrarily large number of coexistence states for sufficiently small $\e>0$ provided $\mu$ is sufficiently large, which is the main finding of this paper.
\par
This paper is organized as follows. Section 2 delivers the proof of Theorem \ref{th1.1}. Section 3 first ascertains the dimension of the unstable manifolds of the solutions of \eqref{1.iii} for $\l$ sufficiently
close to $\l_n^\pm$ through the Exchange Stability Principle of Crandall and Rabinowitz \cite{CR73}, and then combines the underlying analyticity with respect to $\l$ together with the exact multiplicity result established by Theorem \ref{th1.1} to get the dimensions of the unstable manifolds of the positive solutions of \eqref{1.iv} for all value of $\l$ outside a certain finite set. Finally, in Section 4 the multiplicity theorem for the problem \eqref{1.iii} is stated and proven.

\setcounter{equation}{0}
\section{The limiting system}
\label{sec2}

\noindent In this section, we analyze the constant and nonconstant coexistence states of the limiting system derived in Section \ref{sec1}
\begin{equation}
\label{2.i}
\left\{
\begin{array}{ll}
-w''=\lambda w- b\frac{wv}{1+w}\qquad \hbox{in}\;\;(0,1),\\[4pt]
-v''=\mu v -dv^2\qquad\quad\,\hbox{in}\;\;(0,1),\\[4pt]
w'(0)=w'(1)=0,\;\; v'(0)=v'(1)=0.
\end{array}
\right.	
\end{equation}
As \eqref{2.i} is uncoupled and the unique positive solution of
\begin{equation*}
\left\{
\begin{array}{l}
-v''=\mu v -dv^2\qquad\,\hbox{in}\;\;(0,1),\\[2pt]
v'(0)=v'(1)=0,
\end{array}\right.	
\end{equation*}
is the constant function
$$
v_0:=\frac{\mu}{d},
$$
the problem \eqref{2.i} reduces to the analysis of
\begin{equation}
\label{2.2}
\left\{
\begin{array}{ll}
-w''=\lambda w- \frac{b\mu}{d}\frac{w}{1+w}\qquad \hbox{in}\;\;(0,1),\\[4pt]
w'(0)=w'(1)=0,
\end{array}
\right.
\end{equation}
which coincides with \eqref{1.iv}. Thus, throughout this section we will deal with \eqref{2.2}, whose unique constant positive solution is given by
$$
w_0:=\tfrac{b\mu}{d\l}-1,
$$
for all $\mu>0$ and $\l\in(0,\tfrac{b\mu}{d})$.
Regarding $\l$ as the main bifurcation parameter, Figure \ref{Fig1} plots the curve of constant positive solutions of \eqref{2.2},
$$
\mathscr{C}_0:=\left\{(\l,w_0)\;:\;\l\in (0,\tfrac{b\mu}{d})\right\}.
$$
It should be noted that, for every $\mu>0$,  $\l\mapsto w_0$ is decreasing with respect to $\l$ and
$$
\lim_{\l\downarrow 0}w_0=+\infty,\quad \lim_{\l\uparrow \tfrac{b\mu}{d}}w_0=0.
$$
\begin{figure}[h!]
\centering
\includegraphics[scale=0.38]{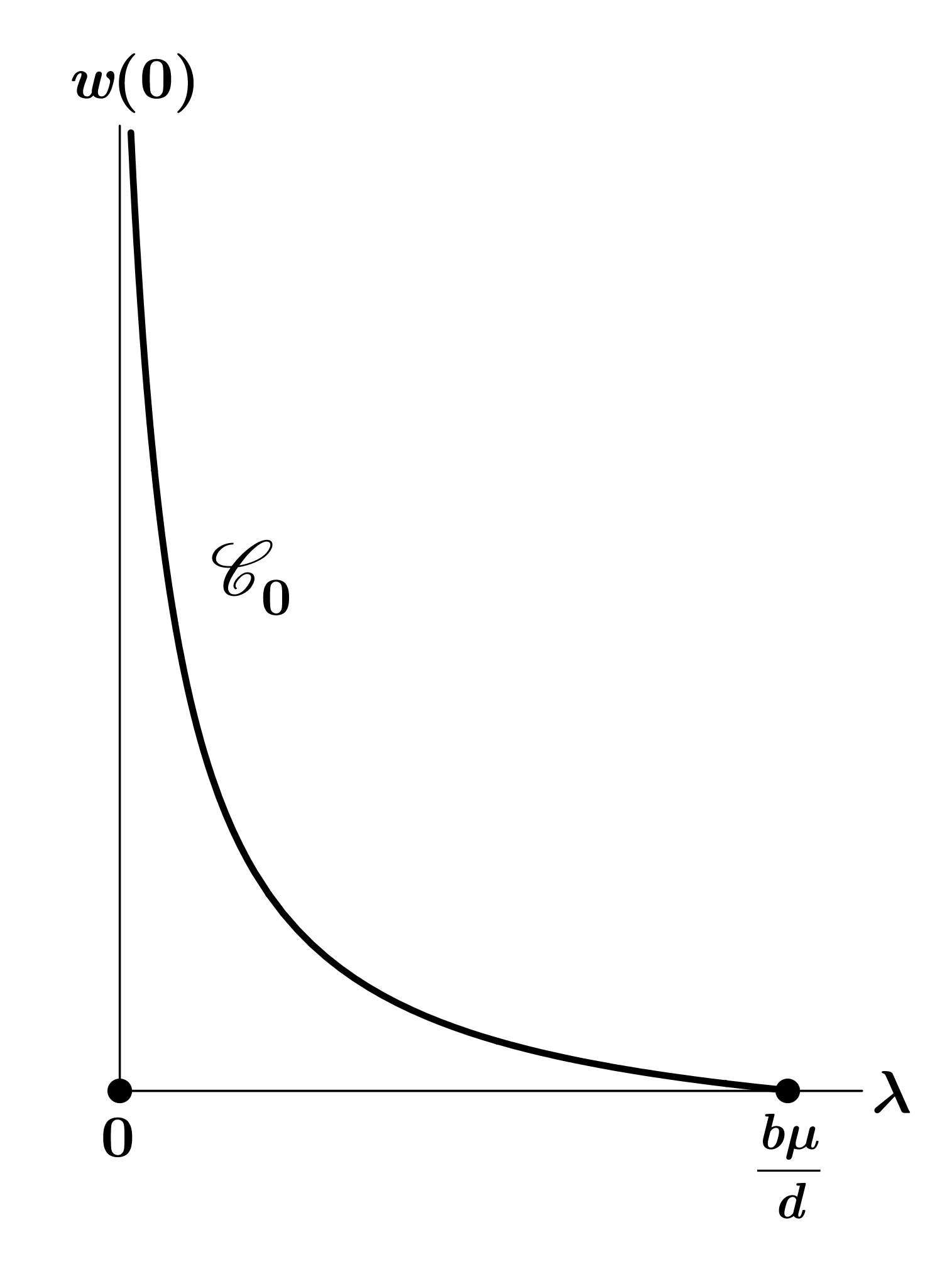}
\caption{The curve $\mathscr{C}_0$ for any fixed $\mu>0$.}
\label{Fig1}
\end{figure}

Actually, as it will become apparent in Lemma \ref{le2.1}, the condition $\l\in (0,\tfrac{b\mu}{d})$ is necessary for the existence of a positive solution (constant or not) of the problem \eqref{2.2}.

\subsection{Spectrum of the linearization around $\mathscr{C}_0$}

\noindent This section analyzes the spectrum of the linearized problem of \eqref{2.2} at the constant positive solution $w_0$. This local analysis will provide us with some pivotal necessary and sufficient conditions for the bifurcation of nonconstant positive solutions from $\mathscr{C}_0$. Subsequently, $\l$ will be the main bifurcation parameter of \eqref{2.2}, while $\mu$ will play the role of a secondary parameter. The bigger is $\mu$, the more complex is the global bifurcation diagram in $\l$ of the problem \eqref{2.2}.
\par
Setting $D^2=\frac{\partial^2}{\p x^2}$, it is folklore that, for every $V\in L^{\infty}(0,1)$, the linear
eigenvalue problem
\begin{equation*}
\left\{
\begin{array}{ll}
(-D^2+V(x))\varphi=\tau\varphi\quad \hbox{in}\;\;(0,1),\\[7pt]
\varphi'(0)=\varphi'(1)=0,
\end{array}
\right.
\end{equation*}
possesses an increasing sequence of simple eigenvalues, which will be denoted by
$$
\{\tau_\ell(V)\}_{\ell=0}^{\infty},
$$
whose associated eigenfunctions have exactly $\ell$ zeros in $(0,1)$. In particular, in the special case when $V(x)\equiv V$ is constant, this sequence is given by
\begin{equation}
\label{2.3}
     \tau_\ell(V):=\tau_\ell(0)+V=(\ell\pi)^2+V,\qquad \ell\in\mathbb{N}\cup\{0\}.
\end{equation}
The next result provides us with a necessary condition for the existence of a positive solution of
\eqref{2.2}. Note that, throughout this paper we are imposing \eqref{1.5}.

\begin{lemma}
\label{le2.1}
Suppose that \eqref{2.2} admits a positive solution. Then, $\l\in (0,\tfrac{b\mu}{d})$.
\end{lemma}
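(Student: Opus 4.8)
The plan is to exploit the maximum-principle argument already sketched in the introduction for \eqref{1.iv}, but carry it out carefully at both the maximum and the minimum of a positive solution in order to squeeze $\l$ into the open interval $(0,\tfrac{b\mu}{d})$. First I would let $w$ be a positive solution of \eqref{2.2}, so that $w(x)>0$ for all $x\in[0,1]$ by the strong maximum principle (together with the Neumann boundary conditions, which prevent the minimum from being attained with a nonzero normal derivative). Since $[0,1]$ is compact and $w$ is continuous, there exist points $x_m,x_M\in[0,1]$ at which $w$ attains its minimum and maximum respectively.

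At the maximum point $x_M$ there are two cases: if $x_M\in(0,1)$ then $w''(x_M)\le 0$; if $x_M\in\{0,1\}$ then, using the boundary conditions $w'(0)=w'(1)=0$ and a one-sided Taylor expansion (equivalently, the Hopf boundary-point argument ruled out by $w'=0$), one still gets $w''(x_M)\le 0$. In either case, evaluating the equation $-w''=\l w-\tfrac{b\mu}{d}\tfrac{w}{1+w}$ at $x_M$ and dividing by $w(x_M)>0$ yields
\[
\l-\frac{b\mu}{d}\,\frac{1}{1+w(x_M)}=-\frac{w''(x_M)}{w(x_M)}\ge 0,
\]
hence $\l\ge \tfrac{b\mu}{d}\cdot\tfrac{1}{1+w(x_M)}>0$, which gives $\l>0$. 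Symmetrically, at the minimum point $x_m$ one has $w''(x_m)\ge 0$, so dividing the equation by $w(x_m)>0$ gives
\[
\l-\frac{b\mu}{d}\,\frac{1}{1+w(x_m)}=-\frac{w''(x_m)}{w(x_m)}\le 0,
\]
and therefore $\l\le \tfrac{b\mu}{d}\cdot\tfrac{1}{1+w(x_m)}<\tfrac{b\mu}{d}$, since $w(x_m)>0$. Combining the two inequalities yields $\l\in(0,\tfrac{b\mu}{d})$, as claimed.

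The only genuinely delicate point is handling the case in which the extremum of $w$ is attained at a boundary endpoint, where the naive second-derivative test does not directly apply; this is dispatched by invoking the Neumann conditions $w'(0)=w'(1)=0$, which, combined with $w\in\mathcal C^2([0,1])$ and the Taylor expansion of $w$ near the endpoint, force the correct sign on $w''$ at that endpoint. I would phrase this uniformly by noting that at any interior \emph{or} boundary maximum $x_M$ one has $w''(x_M)\le 0$ (and $\ge 0$ at a minimum) precisely because $w'$ vanishes there in all cases. Everything else is a one-line rearrangement of the ODE, so no further estimates are needed.
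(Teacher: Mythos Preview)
Your proof is correct, but the paper argues the upper bound differently. Rather than evaluating the equation at a minimum point, the paper rewrites the equation as
\[
\Big(-D^2+\tfrac{b\mu}{d}\tfrac{1}{1+w}\Big)w=\l w
\]
and observes that, since $w>0$ on $[0,1]$, $\l$ must equal the principal Neumann eigenvalue $\tau_0\big(\tfrac{b\mu}{d}\tfrac{1}{1+w}\big)$ of this Schr\"odinger operator. Strict monotonicity of the principal eigenvalue with respect to the potential then gives $\l=\tau_0\big(\tfrac{b\mu}{d}\tfrac{1}{1+w}\big)<\tau_0\big(\tfrac{b\mu}{d}\big)=\tfrac{b\mu}{d}$. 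The lower bound $\l>0$ is simply the standing hypothesis \eqref{1.5}, itself derived earlier in Section~\ref{sec1} by exactly your maximum-point argument.

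Your route is more elementary and entirely self-contained: it needs nothing beyond the second-derivative test and the ODE, and it recovers $\l>0$ without invoking \eqref{1.5}. The paper's spectral argument, on the other hand, fits naturally into the eigenvalue framework developed immediately afterwards and makes the strictness of the upper bound transparent via potential monotonicity. Both are one-paragraph proofs; yours trades the external reference to principal-eigenvalue theory for the small care needed at boundary extrema, which you handle correctly via the Neumann condition and Taylor expansion.
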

\begin{proof}
Let $w$ be a positive solution of \eqref{2.2}. Then,
$$
   \left(-D^2+\tfrac{b\mu}{d}\tfrac{1}{1+w}\right)w =\l w \quad \hbox{in}\;\ (0,1).
$$
Thus, since $w(x)>0$ for each $x\in [0,1]$, we find that
$$
\l=\tau_0\left(\tfrac{b\mu}{d}\tfrac{1}{1+w}\right).
$$
Hence, by the monotonicity of the principal eigenvalue with respect to the potential (see \cite[Sec. 8.1]{LG13}, if necessary), it follows that
$$
\l=\tau_0\left(\tfrac{b\mu}{d}\tfrac{1}{1+w}\right)<\tau_0\left(\tfrac{b\mu}{d}\right)=
 \tfrac{b\mu}{d}.
$$
Therefore, $\l\in (0,\tfrac{b\mu}{d})$.
\end{proof}

Since
$$
   \frac{\p}{\p w} \frac{w}{1+w}=\frac{1}{(1+w)^2}\quad \hbox{and}\quad
   \frac{1}{(1+w_0)^2}= \left(\frac{d\l }{b\mu}\right)^2,
$$
the spectrum of the linearization of  \eqref{2.2} at the constant positive solution $w_0$ consists of the eigenvalues of the linear problem
\begin{equation}
\label{2.4}
\left\{
\begin{array}{lll}
-w''=\l w-\frac{d}{b\mu}\l^2w+\tau w\qquad \hbox{in}\;\;(0,1),\\[7pt]
w'(0)=w'(1)=0.
\end{array}
\right.
\end{equation}
According to \eqref{2.3}, the problem \eqref{2.4} has the following sequence of eigencurves
\begin{equation}
\label{2.5}
   \tau_{0,\ell}(\l)\equiv\tau_{0,\ell}(\l,\mu):=\tau_\ell\left(-\l+\tfrac{d}{b\mu}\l^2\right)=\tfrac{d}{b\mu}\l^2-\l+(\ell\pi)^2,\quad \ell\in\mathbb{N}\cup\{0\},
\end{equation}
which is a sequence of polynomials of degree two in $\l$
whose roots are given by
\begin{equation}
\label{2.6}
\l_{\ell}^{\pm}\equiv\l_{\ell}^{\pm}(\mu):=\tfrac{b\mu}{2d}
\left(1\pm\sqrt{1-\tfrac{4d}{b\mu}(\ell\pi)^2}\right),\quad \ell\in\mathbb{N}\cup\{0\}.
\end{equation}
Thus, setting
\begin{equation}
\label{2.7}
\mu_\k:=\frac{d}{b}(2\k\pi)^2,\qquad\k\in\mathbb{N}\cup\{0\},
\end{equation}
it becomes apparent that
\begin{equation*}
\left\{
\begin{array}{ll}
\l_{\kappa}^-=\overline{\l_{\kappa}^+}\in\mathbb{C}\setminus\mathbb{R},\;\; \tau_{0,\kappa}(\l)>0\;\;\hbox{for all}\;\l\in[0,\frac{b\mu}{d}],&\;\hbox{if}\;\,\mu<\mu_\k,\\[15pt]
\l_{\kappa}^-=\l_{\kappa}^+=\frac{b\mu}{2d},&\;\hbox{if}\;\;\mu=\mu_\k,\\[10pt]
\l_{\kappa}^{\pm}\in(0,\frac{b\mu}{d}),\;\;\tau_{0,\kappa}(\l)
\left\{
\begin{array}{ll}
\!\!\!<0\;\;\hbox{if}\;\; \l\in(\l_{\k}^-,\l_{\k}^+),\\[5pt]
\!\!\!>0\;\;\hbox{if}\;\; \l\in(0,\frac{b\mu}{d})\setminus [\l_{\k}^-,\l_{\k}^+],
\end{array}
\right.
&\;\hbox{if}\;\;\mu>\mu_\k.
\end{array}
\right.
\end{equation*}
Moreover, for every integer $\ell\geq0$, the spectral polynomials  $\tau_{0,\ell}(\l)$ satisfy
\begin{equation}
\label{2.8}
\begin{split}
\tau_{0,\ell}(0)=(\ell\pi)^2=\tau_{0,\ell}\left(\tfrac{b\mu}{d}\right)
=\tau_{0,\ell}\left(\tfrac{b\mu}{2d}\right)+\tfrac{b\mu}{4d},
\quad \dot\tau_{0,\ell}\left(\tfrac{b\mu}{2d}\right)=0,
\end{split}
\end{equation}
where $\dot{\tau}_{0,\ell}$ stands for the derivative of $\tau_{0,\ell}(\l)$ with respect to $\l$. Note that, thanks to \eqref{2.8},
$$
\lim_{\mu\uparrow +\infty}\tau_{0,\ell}\left(\tfrac{b\mu}{2d}\right)=-\infty.
$$
The left picture of Figure \ref{Fig2} represents the eigencurves $\tau_{0,\ell}(\lambda)$ for $\ell\in\{0,1,2\}$ when $\mu \in (\mu_1,\mu_2)$.
\begin{figure}[h!]
\centering
\includegraphics[scale=0.41]{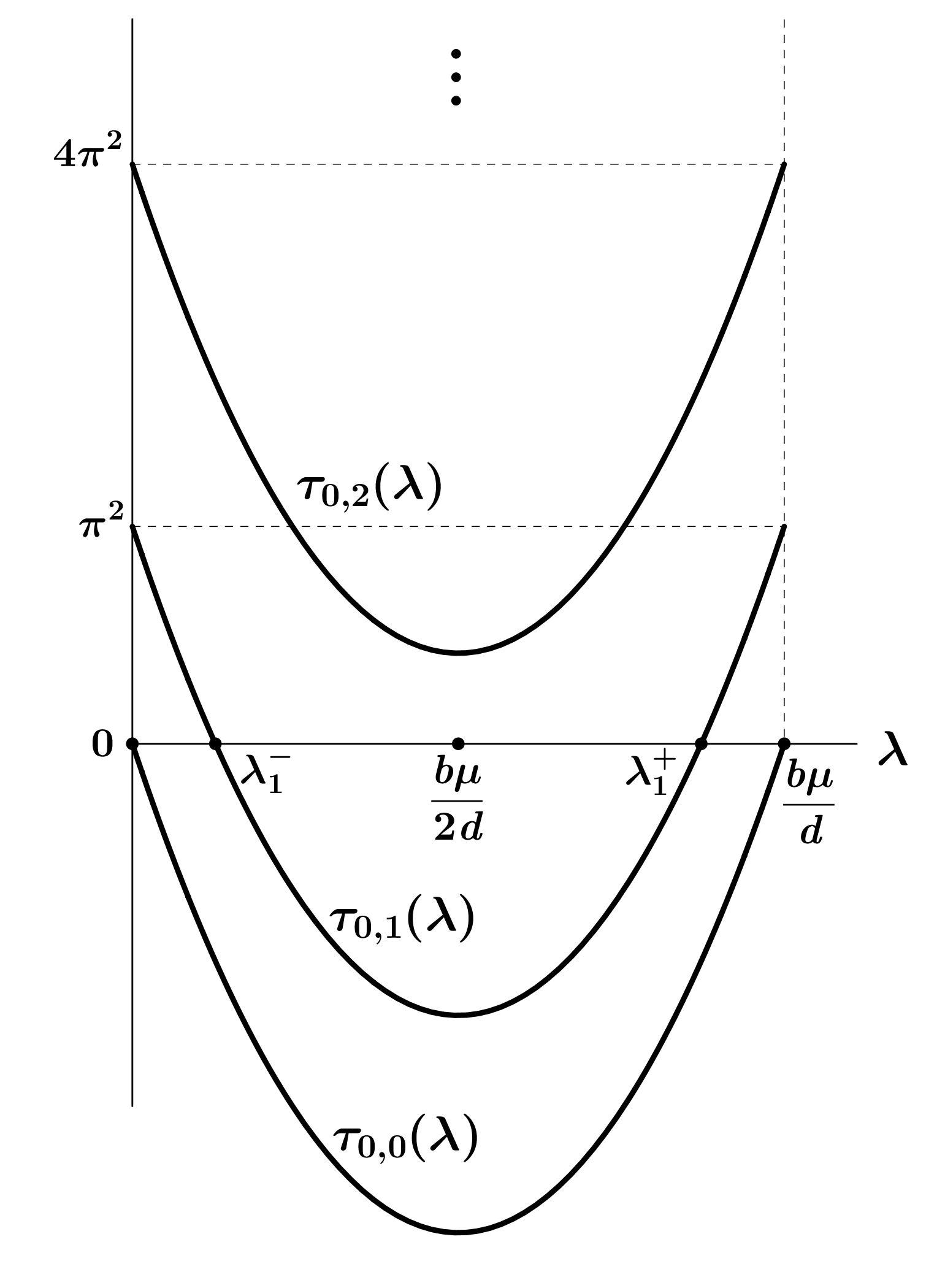}\quad
\includegraphics[scale=0.47]{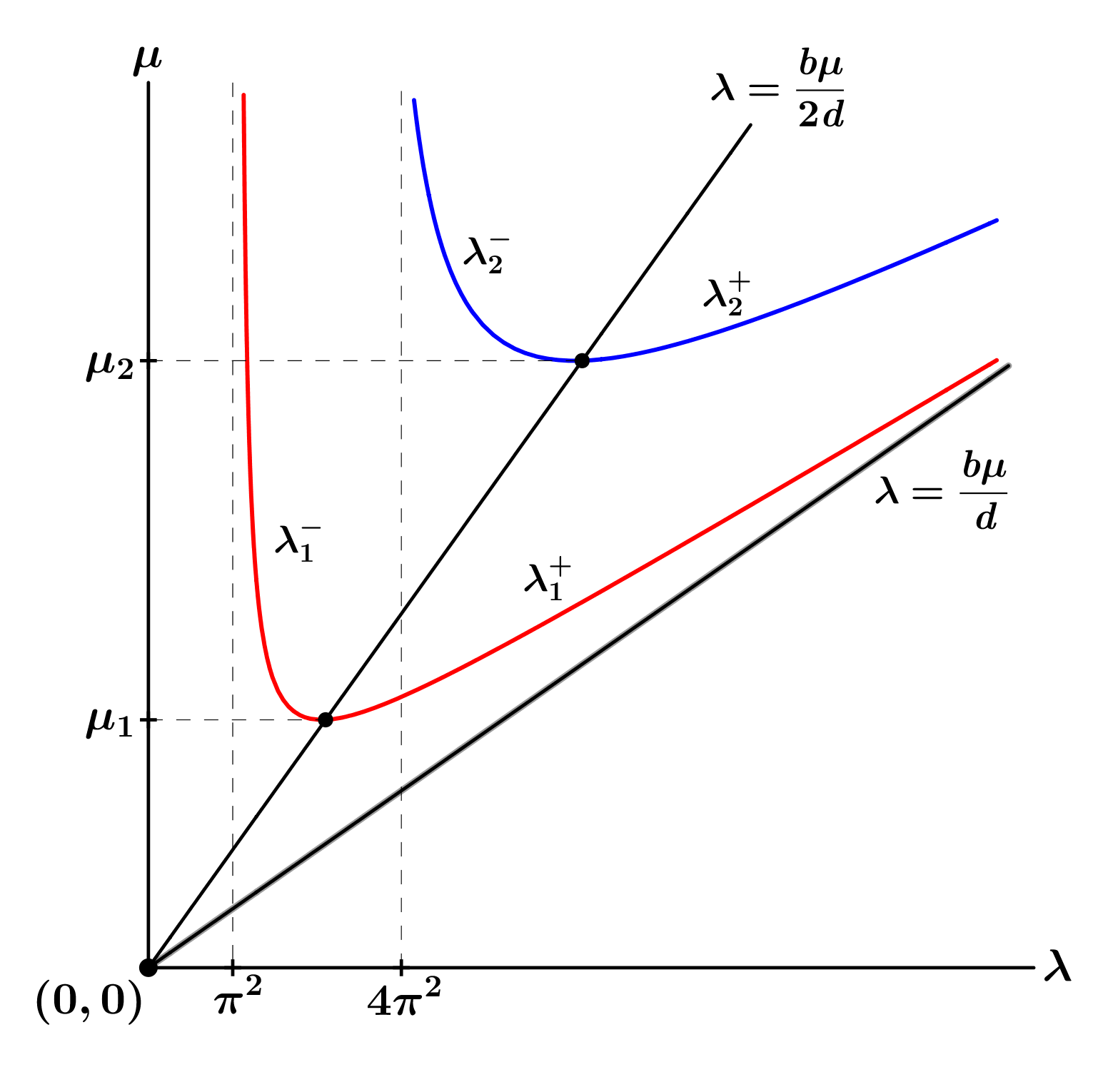}
\caption{The eigencurves $\tau_{0,\k}(\lambda)$, $\k\in\{0,1,2\}$, with $\mu_1<\mu<\mu_2$  (left), and the real roots $\l_1^{\pm}$ and $\l_2^{\pm}$ of the two eigencurves $\tau_{0,1}(\l)$ and $\tau_{0,2}(\l)$, respectively (right).}
\label{Fig2}
\end{figure}
As \eqref{2.7} provides us with an increasing sequence of values of $\mu$ such that
$\lim_{\k\uparrow \infty}\mu_\k =+\infty$, the larger is $\mu$, the larger is  the number of $\k$'s for which $\mu>\mu_\k$ holds. Thanks to \eqref{2.6}, for every $\k\geq1$ and $\mu>\mu_\k$,
\begin{equation}
\label{2.9}
0<\l_{1}^-<\l_{2}^-<\cdots<\l_{\k}^-<\tfrac{b\mu}{2d}<\l_{\k}^+<\cdots<\l_{2}^+<\l_{1}^+<\tfrac{b\mu}{d}.
\end{equation}
Moreover,
\begin{align*}
   \lim_{\mu\uparrow +\infty}\l_{\k}^-& =  \lim_{\mu\uparrow +\infty}
   \left[ \tfrac{b\mu}{2d} \frac{\left(1-\sqrt{1-\tfrac{4d}{b\mu}(\k\pi)^2}\right)
   \left(1+\sqrt{1-\tfrac{4d}{b\mu}(\k\pi)^2}\right)}{1+\sqrt{1-\tfrac{4d}{b\mu}(\k\pi)^2}}\right]\\ & =
   \lim_{\mu\uparrow +\infty}  \left( \tfrac{b\mu}{2d} \frac{ \tfrac{4d}{b\mu} (\k\pi)^2} {1+\sqrt{1-\tfrac{4d}{b\mu}(\k\pi)^2}}\right)=(\k \pi)^2
\end{align*}
and
$$
\lim_{\mu\uparrow +\infty}\frac{\l_{\kappa}^+}{b\mu/d}=1.
$$
In particular,
$$
\lim_{\mu\uparrow +\infty}\l_{\kappa}^+=+\infty.
$$
Furthermore, for every integer $\ell\geq 1$, a direct calculation shows that
$$
    \tfrac{\p\l_{\ell}^-}{\p\mu}<0<\tfrac{\p\l_{\ell}^+}{\p\mu},
$$
which is reinforced by the fact that $\tau_{0,\ell}(\l)$ is decreasing with respect to $\mu$. The right picture in Figure \ref{Fig2} represents the behavior of the roots $\l_1^{\pm}$ and $\l_2^{\pm}$ in the $(\l,\mu)$-plane. Finally, note that, for every $\mu>\mu_\k$, $\l_{\kappa}^{\pm}$ are simple roots of $\tau_{0,\kappa}(\l)$ such that
\begin{equation}
\label{2.x}
\begin{split}
\dot{\tau}_{0,\k}(\l_{\kappa}^-) & = \tfrac{2d}{b\mu}\l_{\kappa}^--1 =-\sqrt{1-\tfrac{4d}{b\mu}(\k\pi)^2}\\ & <0<\sqrt{1-\tfrac{4d}{b\mu}(\k\pi)^2}=\tfrac{2d}{b\mu}\l_{\kappa}^+-1= \dot{\tau}_{0,\kappa}(\l_{\kappa}^+).
\end{split}
\end{equation}
From the previous spectral analysis  one can easily count the number of negative eigenvalues of
the linearization of \eqref{2.2} at $w_0=\frac{b\mu}{d\l}-1$, given by \eqref{2.4}, according to the values of the two parameters $\l\in (0,\frac{b\mu}{d})$ and $\mu>0$. Such a number, coincident with the dimension of the unstable manifold of $w_0$, will be denoted by $\mathscr{M}(w_0)$ throughout this paper, as it is often referred to as the \textit{Morse index}. As, for every $\mu>0$,
$$
  \tau_{0,0}(\l)=\left(\tfrac{d\l}{b\mu}-1\right) \l<0 \;\;\hbox{for all}\;\; \l\in (0,\tfrac{b\mu}{d}),
$$
we have that $\mathscr{M}(w_0)\geq 1$ for all $\mu>0$. As it will become apparent from the next discussion,  the value of $\mathscr{M}(w_0)$ depends on the precise values of $\l$ and $\mu$.
\par
Suppose $\mu <\mu_1$. Then, $\tau_{0, \ell}(\l)>0$ for all integer $\ell\geq 1$ and $\l\in(0,\frac{b\mu}{d})$. Thus, $\mathscr{M}(w_0)=1$ for all $\l\in(0,\frac{b\mu}{d})$. When $\mu=\mu_1$, then,
$\tau_{0, 1}(\tfrac{b\mu}{2d})=0$ and $\tau_{0, 1}(\l)>0$ for all $\l \in (0,\frac{b\mu}{d})\setminus
\{\frac{b\mu}{2d}\}$. Thus, also $\mathscr{M}(w_0)=1$.
\par
Suppose $\mu\in (\mu_1,\mu_2)$. Then, since $\mu<\mu_2$, we have that $\tau_{0,\ell}(\l)>0$ for all $\ell\geq 2$ and $\l\in(0,\frac{b\mu}{d})$. Consequently, $1\leq \mathscr{M}(w_0)\leq 2$ for all 
$\l\in(0,\frac{b\mu}{d})$. Moreover, since
$$
\tau_{0,1}(\l) \left\{ \begin{array}{ll} >0 & \;\;\hbox{if}\;\; \l \in (0,\l_1^-)\cup(\l_1^+,\tfrac{b\mu}{d}),\\ <0 & \;\;\hbox{if}\;\; \l \in (\l_1^-,\l_1^+), \end{array}\right.
$$
it becomes apparent that
\begin{equation}
\label{2.11}
  \mathscr{M}(w_0)=\left\{ \begin{array}{ll} 1 & \;\;\hbox{if}\;\; \l\in (0,\l_1^-]\cup [\l_1^+,\frac{b\mu}{d}),\\[2pt] 2 & \;\;\hbox{if}\;\; \l\in (\l_1^-,\l_1^+).\end{array}\right.
\end{equation}
When $\mu=\mu_2$, we  have that $\tau_{0, 2}(\l)>0$ for each $\l\in(0,\frac{b\mu}{d})
\setminus\{\frac{b\mu}{2d}\}$, but $\tau_{0, 2}(\frac{b\mu}{2d})=0$. Consequently,
\eqref{2.11} remains valid.
\par
More generally, when $\mu \in (\mu_\k,\mu_{\k+1})$ for some integer $\k\geq 2$, then
$$
\tau_{0, \ell}(\l)>0\;\;\hbox{for all}\;\; \ell\geq \k+1\;\; \hbox{and}\;\; \l\in(0,\tfrac{b\mu}{d}),
$$
and
\begin{equation}
\label{2.12}
  \mathscr{M}(w_0)=\left\{ \begin{array}{ll} 1 & \;\;\hbox{if}\;\; \l\in (0,\l_1^-]\cup [\l_1^+,\frac{b\mu}{d}),\\[2pt]  2  & \;\;\hbox{if}\;\; \l\in (\l_1^-,\l_2^-]\cup [\l_2^+,\l_1^+), \\
  \cdots & \\   \k  & \;\;\hbox{if}\;\; \l\in (\l_{\k-1}^-,\l_{\k}^-]\cup [\l_{\k}^+,\l_{\k-1}^+), \\[2pt]
    \k+1  & \;\;\hbox{if}\;\; \l\in (\l_{\k}^-,\l_{\k}^+). \end{array}\right.
\end{equation}
The change of the Morse index by one when $\l$ crosses $\l_\ell^\pm$ for every $\ell\in \{1,....,\k\}$ if $\mu \in (\mu_\k,\mu_{\k+1})$ entails the bifurcation of a  branch of positive solutions of \eqref{2.2} bifurcating
from $\mathscr{C}_0$ at  $\l=\l_\ell^\pm$, $\ell\in \{1,....,\k\}$, as it will become apparent in the forthcoming analysis.

\subsection{Phase plane analysis}

\noindent In this section, we are using  some non-standard phase plane techniques to ascertain the fine global structure of the set of nonconstant positive solutions of \eqref{2.2}.  As a consequence of this analysis, every nonconstant positive solution of \eqref{2.2} will be characterized from its nodal behavior with respect to $w_0$. Precisely, for every integer $n\geq 1$, it is said that   $w_n$ is a positive $(n,w_0)$-nodal solution of \eqref{2.2} if $w_n$ is a positive solution of \eqref{2.2} and $w_n-w_0$ has exactly $n$ zeroes in $(0,1)$, i.e. $w_n$ is a positive solution of \eqref{2.2} crossing $n$ times the constant solution $w_0$.
\par
Since the problem \eqref{2.2} is autonomous, phase plane analysis and map-time techniques can be used.
As its associated first order system is given by
\begin{equation}
\label{2.13}
\left\{
\begin{array}{ll}
w'=z,\\[5pt]
z'=-\l w+\frac{b\mu}{d}\frac{w}{1+w},
\end{array}
\right.
\end{equation}
the  unique nontrivial equilibrium of \eqref{2.13} is $(w_0,0)=(\frac{b\mu}{d\l}-1,0)$, and
the associated total energy is
$$
  \mathscr{E}(w,z):=\tfrac{1}{2}z^2+\tfrac{\l}{2}w^2-\tfrac{b\mu}{d}\left[w-\ln(1+w)\right].
$$
Thus, we can take as potential energy the function
\begin{equation}
\label{2.14}
   F(w):=\tfrac{\l}{2}w^2-\tfrac{b\mu}{d}\left[w-\ln(1+w)\right],\qquad w >-1.
\end{equation}
As the function $F$ satisfies, for every  $w>-1$,
$$
   F'(w)=\l w-\frac{b\mu}{d}\frac{w}{1+w},\qquad  F''(w)=\l-
   \frac{b\mu}{d}\frac{1}{(1+w)^2},
$$
where $'$ stands for the derivative with respect to $w$ here, it is apparent that
$$
(F')^{-1}(0)=\{0,w_0\},\quad F''(0)<0,\quad F''(w_0)>0, \quad \lim_{w\uparrow +\infty}F(w)=+\infty.
$$
Thus, in the $(w,z)$ phase plane, the equilibrium $(0,0)$ is a local saddle point, while the equilibrium $(w_0,0)$ is a local center surrounded by periodic orbits enclosed by the homoclinic connection
$$
  \mathscr{E}(w,z)=\mathscr{E}(0,0)=0.
$$
Therefore, the phase plane portrait of \eqref{2.13} for $w\geq 0$ is the one sketched in Figure \ref{Fig3}.
By simply having a glance at Figure \ref{Fig3} it is easily realized that the positive solutions of \eqref{2.2} should have their trajectories in the negative energy levels of \eqref{2.13}, which have been plotted using green orbits in Figure \ref{Fig3}. Let denote by  $(w_\pm,0)$ the crossing points of a generic periodic orbit with the $w$-axis. Then,
\begin{equation}
\label{2.15}
\begin{split}
\mathscr{E}(w,z)&=\tfrac{1}{2}z^2+\tfrac{\l}{2}w^2-\tfrac{b\mu}{d}\left[w-\ln(1+w)\right]
\\&=\tfrac{\l}{2}w_{\pm}^2-\tfrac{b\mu}{d}\left[w_\pm-\ln(1+w_\pm)\right]=F(w_\pm)<0.
\end{split}
\end{equation}

\begin{figure}[h!]
\centering
\includegraphics[scale=0.22]{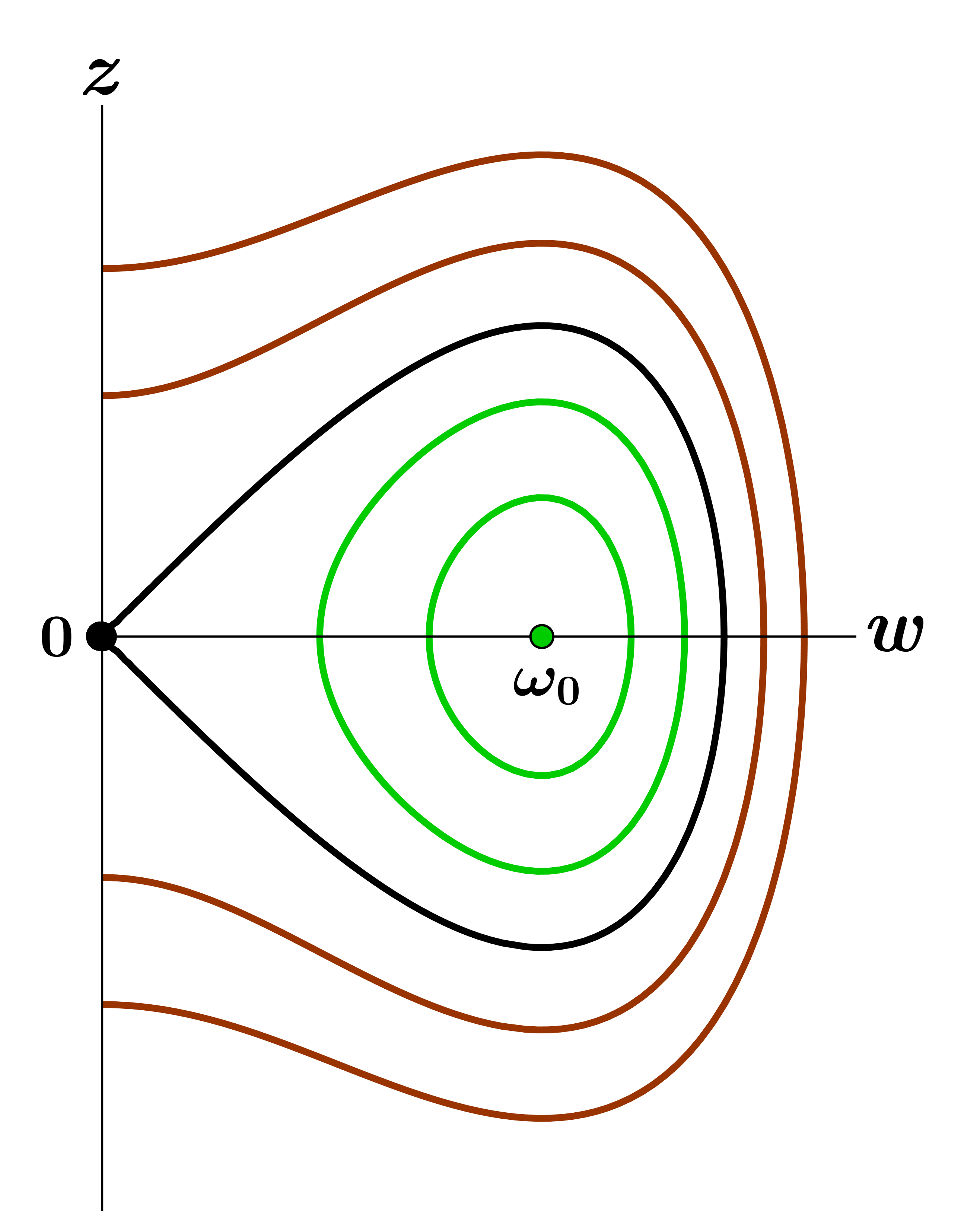}
\caption{Phase plane portrait  of \eqref{2.13}.}
\label{Fig3}
\end{figure}

Subsequently, for every $w_-\in(0,w_0)$, we denote by
$$
   T(w_-):=T(w_-,w_+(w_-))>0
$$
the necessary time used for the solution of \eqref{2.13} to link $(w_-,0)$ with $(w_+,0)$ through the orbit \eqref{2.15}. As  $\mathscr{E}(w,z)=0$ is a homoclinic connection of $(0,0)$, we find that
\begin{equation}
\label{2.16}
\lim_{w_-\downarrow0}T(w_-)=+\infty.
\end{equation}
Moreover, as established by the next lemma, $\lim_{w_-\uparrow w_0}T(w_-)$ can be calculated from the corresponding time map
for the linearized system of \eqref{2.13} at $(w_0,0)$, which is given by
\begin{equation}
\label{2.17}
\left\{
\begin{array}{ll}
w'=z,\\[2pt]
z'=-\l\Big(1-\tfrac{d\l}{b\mu}\Big) w.
\end{array}
\right.
\end{equation}
As it becomes apparent from the proof of the lemma, this is equivalent to compute $T(w_-)$, linearize the potential energy appearing in the $T(w_-)$ formula and letting  $w_-\uparrow w_0$.

\begin{lemma}
\label{le2.2}
For every $w_-\in (0,w_0)$, $T(w_-)$ coincides with half of the period of the periodic orbit of \eqref{2.13} through the point $(w_-,0)$. Moreover,
\begin{equation}
\label{2.18}
T(w_0):=\lim_{w_-\uparrow w_0}T(w_-)=\frac{\pi}{\sqrt{\l\left(1-\tfrac{d\l}{b\mu}\right)}}
\end{equation}
coincides with half of the  period of the corresponding orbit of \eqref{2.17}.
\end{lemma}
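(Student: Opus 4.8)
The plan is to regard \eqref{2.13} as a reversible planar Hamiltonian system with Hamiltonian $\mathscr{E}(w,z)=\tfrac12 z^2+F(w)$, to express $T(w_-)$ as a single quadrature involving the potential $F$ of \eqref{2.14}, and then to pass to the limit $w_-\uparrow w_0$ by means of the one–dimensional Morse lemma, which converts that quadrature into an integral whose value is dictated solely by $F''(w_0)$.

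For the first assertion, fix $w_-\in(0,w_0)$ and let $w_+=w_+(w_-)>w_0$ be determined by $F(w_+)=F(w_-)<0$; it exists and is unique because, by the sign analysis of $F'$ already carried out, $F$ is strictly decreasing on $(0,w_0)$ and strictly increasing on $(w_0,+\infty)$, with $F(0)=0$, $F(w_0)<0$ and $F(w)\to+\infty$ as $w\uparrow+\infty$. The level set $\{\mathscr{E}=F(w_-)\}$ is then the closed orbit of \eqref{2.13} through $(w_-,0)$; on its upper arc $z=\sqrt{2(F(w_-)-F(w))}>0$ and $w'=z$, so $w$ runs monotonically from $w_-$ to $w_+$ in time
$$
T(w_-)=\int_{w_-}^{w_+}\frac{dw}{\sqrt{2\bigl(F(w_-)-F(w)\bigr)}},
$$
the integral being finite since $F'(w_\pm)\neq0$ renders the endpoint singularities of square–root type, hence integrable. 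As \eqref{2.13} is invariant under $(z,t)\mapsto(-z,-t)$, the lower arc is traversed in the same time $T(w_-)$, so the period of the orbit equals $2T(w_-)$.

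For \eqref{2.18} I would symmetrise the quadrature. Since $F'(w_0)=0$ and $F''(w_0)=\lambda-\tfrac{b\mu}{d}\tfrac{1}{(1+w_0)^2}=\lambda\bigl(1-\tfrac{d\lambda}{b\mu}\bigr)>0$, Taylor's theorem with integral remainder gives $F(w)-F(w_0)=(w-w_0)^2h(w)$ with $h$ analytic near $w_0$ and $h(w_0)=\tfrac12 F''(w_0)>0$; hence $\psi(w):=(w-w_0)\sqrt{2h(w)}$ is a real–analytic diffeomorphism of a neighbourhood of $w_0$ onto a neighbourhood of $0$ with $\psi(w_0)=0$, $\psi'(w_0)=\sqrt{F''(w_0)}=:\omega$, and $\tfrac12\psi(w)^2=F(w)-F(w_0)$. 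For $w_-$ close enough to $w_0$ the whole interval $[w_-,w_+]$ lies in this neighbourhood, because $w_+(w_-)\to w_0$ as $w_-\uparrow w_0$ ($F$ being a strictly increasing homeomorphism on $[w_0,w_0+\delta]$). Putting $\sigma=\psi(w)$ and $\sigma_0:=|\psi(w_-)|=\psi(w_+)>0$ (the last equality from $F(w_+)=F(w_-)$, whence $\psi(w_+)^2=\psi(w_-)^2$), the quadrature becomes
$$
T(w_-)=\int_{-\sigma_0}^{\sigma_0}\frac{g(\sigma)\,d\sigma}{\sqrt{\sigma_0^2-\sigma^2}}=\int_{-\pi/2}^{\pi/2}g(\sigma_0\sin\phi)\,d\phi ,
$$
where $g(\sigma):=1/\psi'\bigl(\psi^{-1}(\sigma)\bigr)$ is continuous with $g(0)=1/\omega$. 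Letting $w_-\uparrow w_0$ forces $\sigma_0\downarrow0$ and, by continuity of $g$, the integrand tends to $1/\omega$ uniformly on $[-\pi/2,\pi/2]$, so $T(w_-)\to\pi/\omega=\pi/\sqrt{\lambda(1-d\lambda/(b\mu))}$. Performing the identical computation with the purely quadratic potential $\tfrac12\omega^2(w-w_0)^2$ — which is precisely the potential energy of the linearised system \eqref{2.17} — yields the amplitude–independent value $\pi/\omega$, so \eqref{2.18} is exactly half the period of every orbit of \eqref{2.17}.

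The only genuinely delicate point is the limit $w_-\uparrow w_0$: one must show that the improper time integral extends continuously to $w_-=w_0$ and that its value there is governed solely by $F''(w_0)$. The Morse substitution $\sigma=\psi(w)$ is what makes this routine, since it simultaneously straightens the moving square–root singularities at $w_\pm$ and the asymmetry of the orbit about $w_0$, reducing the matter to continuity of $g$ at $0$; a direct attack on the integral as $w_\pm$ coalesce would be considerably more awkward.
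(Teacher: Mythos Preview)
Your argument is correct. The first assertion (half-period via time-reversal symmetry) matches the paper's one-line appeal to the symmetry of \eqref{2.13} about the $w$-axis. The computation of the limit \eqref{2.18}, however, is genuinely different. The paper splits the quadrature at $w_0$ into $T_1+T_2$, rescales each piece to $[0,1]$ by $w=w_0+\theta(w_\pm-w_0)$, and replaces the integrand by the second-order Taylor expansion $F(w_\pm)-F(w_0+\theta(w_\pm-w_0))=(1-\theta^2)\tfrac12 F''(w_0)(w_\pm-w_0)^2+O((w_\pm-w_0)^3)$ before letting $w_\pm\to w_0$, arriving at $2\int_0^1(1-\theta^2)^{-1/2}F''(w_0)^{-1/2}\,d\theta=\pi/\sqrt{F''(w_0)}$. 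Your Morse-lemma substitution $\sigma=\psi(w)$ straightens the potential \emph{exactly} rather than to leading order, turning $T(w_-)$ into $\int_{-\pi/2}^{\pi/2} g(\sigma_0\sin\phi)\,d\phi$ with a \emph{bounded} integrand; the limit then reduces to continuity of $g$ at $0$, with no residual endpoint singularity. This is cleaner: the paper's approach leaves implicit the domination needed near $\theta=1$, where the leading term $(1-\theta^2)F''(w_0)$ vanishes while the $O((w_\pm-w_0)^3)$ remainder need not. Your route costs the auxiliary diffeomorphism $\psi$ but repays it with a fully rigorous and transparent passage to the limit.
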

\begin{proof}
According to \eqref{2.15}, $T(w_-)$ can be expressed in terms of the potential energy $F(w)$ as
\begin{equation*}
T(w_-)=\int_{w_-}^{w_0}\frac{dw}{\sqrt{2\left[F(w_-)-F(w)\right]}}+
\int_{w_0}^{w_+}\frac{dw}{\sqrt{2\left[F(w_+)-F(w)\right]}}.
\end{equation*}
The fact that $T(w_-)$ equals half of the period of the periodic orbit of \eqref{2.13} through the point $(w_-,0)$ follows from the symmetry of \eqref{2.13} with respect to the $w$-axis.
For the rest of this proof, we set
\begin{align*}
  T_1(w_-) & := \int_{w_-}^{w_0}\frac{dw}{\sqrt{2\left[F(w_-)-F(w)\right]}},\\
  T_2(w_+)  & :=  \int_{w_0}^{w_+}\frac{dw}{\sqrt{2\left[F(w_+)-F(w)\right]}}.
\end{align*}
Then, introducing the changes of variable
$$
   w=w_0+\t(w_--w_0)\quad \hbox{and}\quad w=w_0+\t(w_+-w_0)
$$
in $T_1(w_-)$ and $T_2(w_+)$, respectively, it follows that
\begin{align*}
T_1(w_-) & =-\int_0^1\frac{(w_--w_0)\,d\t}{\sqrt{2\left[F(w_-)-F(w_0+\t(w_--w_0))\right]}},\\
T_2(w_+)& =\int_0^1\frac{(w_+-w_0)\,d\t}{\sqrt{2\left[F(w_+)-F(w_0+\t(w_+-w_0))\right]}}.
\end{align*}
Moreover, for $w_\pm\approx w_0$, the differences inside the square roots can be developed as
\begin{equation*}
\begin{split}
&F(w_\pm)-F(w_0+\t(w_\pm-w_0))\\&=F(w_\pm)-F(w_0)+F(w_0)-F(w_0+\t(w_\pm-w_0))\\
&=F'(w_0)(w_\pm-w_0)+\tfrac{1}{2}F''(w_0)(w_\pm-w_0)^2+O((w_\pm-w_0)^3)\\
&-\left(F'(w_0)\t(w_\pm-w_0)+\tfrac{1}{2}F''(w_0)\t^2(w_\pm-w_0)^2+O((w_\pm-w_0)^3)\right)\\
&=(1-\t^2) \tfrac{1}{2}F''(w_0)(w_\pm-w_0)^2+O((w_\pm-w_0)^3).
\end{split}
\end{equation*}
Thus,
\begin{equation}
\label{2.19}
\begin{split}
\lim_{w_-\uparrow w_0}T(w_-) & =\lim_{w_-\uparrow w_0}T_1(w_-)+\lim_{w_+\downarrow w_0}T_2(w_+)\\ & =2\int_0^1\frac{d\t}{\sqrt{(1-\t^2)F''(w_0)}}=\frac{\pi}{\sqrt{F''(w_0)}}.
\end{split}
\end{equation}
Therefore, taking into account that
$$
  F''(w_0)=\l-   \frac{b\mu}{d}\frac{1}{(1+w_0)^2}=\l\left(1-\frac{d\l}{b\mu}\right),
$$
\eqref{2.18} holds. Finally, since  \eqref{2.17} can be expressed as
$$
   w''+\l \left(1-\tfrac{b\l}{d\mu}\right)  w =0,
$$
it becomes apparent that $T(w_0)$ equals half of the period of the solutions of \eqref{2.17}. The proof is complete.
\end{proof}

Thanks to the continuity of the period map, the next result follows from \eqref{2.16} and \eqref{2.18}.

\begin{proposition}
\label{pr2.1}
Suppose $\mu>\mu_\kappa$ for some integer $\k\geq 1$. Then, for every $\l \in (\l_\k^-,\l_\k^+)$, the problem \eqref{2.2} possesses, at least, two positive $(\k,w_0)$-nodal solutions.
\end{proposition}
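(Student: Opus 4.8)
The plan is to recast the existence of positive $(\k,w_0)$-nodal solutions of \eqref{2.2} as a problem about the time map $T$ of Lemma \ref{le2.2}, and then to close the argument with the Intermediate Value Theorem, using the two asymptotics \eqref{2.16} and \eqref{2.18} together with the identification of the inequality $\k\,T(w_0)<1$ with the condition $\tau_{0,\k}(\l)<0$.

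First I would establish the dictionary between positive $(\k,w_0)$-nodal solutions of \eqref{2.2} and periodic orbits of the planar system \eqref{2.13} of prescribed half-period. Any positive nonconstant solution $w$ of \eqref{2.2} traces, through $x\mapsto(w(x),w'(x))$, one of the negative-energy periodic orbits around the center $(w_0,0)$ depicted in Figure \ref{Fig3}; denote by $(w_-,0)$ and $(w_+,0)$, with $0<w_-<w_0<w_+$, its two intersections with the $w$-axis. The Neumann conditions force the trajectory to start and end on the $w$-axis, that is, at one of the two points $(w_\pm,0)$; since consecutive returns of the orbit to the $w$-axis occur exactly at the instants $x=j\,T(w_-)$, $j\in\mathbb{N}\cup\{0\}$, alternating between $(w_-,0)$ and $(w_+,0)$, the requirement $w'(1)=0$ amounts to $1=m\,T(w_-)$ for some $m\in\mathbb{N}$. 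Moreover, between two such consecutive instants the trajectory crosses the line $w=w_0$ exactly once and transversally (the equilibrium $(w_0,0)$ does not lie on the orbit), so $w-w_0$ has exactly $m$ simple interior zeros. Hence $w$ is a positive $(\k,w_0)$-nodal solution if and only if it lies on a periodic orbit with $\k\,T(w_-)=1$; and on each such orbit exactly two Neumann solutions live, the one starting at $(w_-,0)$ and the one starting at $(w_+,0)$, which are distinct because they take the different values $w_-\neq w_+$ at $x=0$, and both of which are positive since $w_-<w<w_+$ along the orbit.

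Next I would solve $\k\,T(w_-)=1$ for $w_-\in(0,w_0)$. By \eqref{2.16}, $\k\,T(w_-)\to+\infty$ as $w_-\downarrow 0$, while, by \eqref{2.18},
$$
\lim_{w_-\uparrow w_0}\k\,T(w_-)=\frac{\k\pi}{\sqrt{\l\bigl(1-\tfrac{d\l}{b\mu}\bigr)}}.
$$
This last limit is strictly smaller than $1$ precisely when $(\k\pi)^2<\l-\tfrac{d}{b\mu}\l^2$, i.e. when $\tau_{0,\k}(\l)=\tfrac{d}{b\mu}\l^2-\l+(\k\pi)^2<0$, which, by the trichotomy displayed right after \eqref{2.7} and under the standing hypothesis $\mu>\mu_\k$, holds exactly for $\l\in(\l_\k^-,\l_\k^+)$. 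Since the period map is continuous on $(0,w_0)$, the Intermediate Value Theorem then produces some $w_*\in(0,w_0)$ with $\k\,T(w_*)=1$, and the dictionary of the previous paragraph converts this single orbit into (at least) the two distinct positive $(\k,w_0)$-nodal solutions claimed by the proposition.

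I expect the main difficulty to be the first step: one must argue carefully that every positive Neumann solution of \eqref{2.2} sits on one of the negative-energy periodic orbits around $(w_0,0)$ — rather than on the homoclinic loop $\mathscr{E}=0$, or coinciding with the constant $w_0$ — and that the line $w=w_0$ is met exactly once and transversally per half-period, so that the number of interior zeros of $w-w_0$ is exactly $m$ and not merely bounded by it. Once this correspondence is secured, the rest is a routine application of the Intermediate Value Theorem to the continuous function $w_-\mapsto\k\,T(w_-)$, whose two limiting values at the endpoints of $(0,w_0)$ straddle $1$ precisely on the interval $(\l_\k^-,\l_\k^+)$.
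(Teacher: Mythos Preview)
Your proposal is correct and follows essentially the same route as the paper: translate $\tau_{0,\k}(\l)<0$ on $(\l_\k^-,\l_\k^+)$ into $\k\,T(w_0)<1$ via \eqref{2.18}, combine with \eqref{2.16} and continuity of $T$ to solve $\k\,T(w_-)=1$, and read off two $(\k,w_0)$-nodal solutions from that orbit. The only cosmetic difference is that the paper obtains the second solution by an explicit shift $\tilde w_\k(x)=\hat w_\k(x+\k^{-1})$ of an even extension, whereas you phrase it as the trajectory starting at $(w_+,0)$; these are the same object.
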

\begin{proof} Since $\mu>\mu_\k$, we have that $\tau_{0,\k}(\l)<0$ for all $\l\in (\l_\k^-,\l_\k^+)$. Thus, thanks
to \eqref{2.5}, we find that
$$
   (\k\pi)^2 -\l\left( 1-\tfrac{d\l}{b\mu}\right) <0 \quad \hbox{for all}\;\; \l\in (\l_\k^-,\l_\k^+).
$$
Consequently, thanks to Lemma \ref{le2.2},
\begin{equation}
\label{2.20}
\kappa T(w_0)=\frac{\k\pi}{\sqrt{\l\left(1-\tfrac{d\l}{b\mu}\right)}}<1 \quad \hbox{for all}\;\; \l\in (\l_\k^-,\l_\k^+).
\end{equation}
Hence, by \eqref{2.16}, there exists $w_-\in (0,w_0)$ such that
$$
  \kappa T(w_-)=1.
$$
By construction, the unique solution of the Cauchy problem
\begin{equation*}
\left\{
\begin{array}{ll}
-w''=\lambda w- \frac{b\mu}{d}\frac{w}{1+w},\\[2pt]
w(0)=w_-, \;\; w'(0)=0,
\end{array}
\right.	
\end{equation*}
denoted by $w_\kappa$, provides us with a positive $(\k,w_0)$-nodal solution of \eqref{2.2}. As \eqref{2.2}
is autonomous, considering the extension of $w_\k(x)$ to $[0,2]$,
$$
\hat w_\k(x):=
\left\{
\begin{array}{ll}
w_\kappa(x)&\quad\hbox{if}\;\, x\in[0,1],\\[4pt]
w_\k(2-x)&\quad\hbox{if}\;\, x\in(1,2],
\end{array}
\right.
$$
the shift
\begin{equation}
\label{2.21}
\tilde w_\k(x):=\hat w_\kappa(x+\kappa^{-1}),\qquad x\in[0,1],
\end{equation}
provides us with another positive $(\k,w_0)$-nodal solution of \eqref{2.2}. Note that $w_\k \neq \tilde w_\k$ because
$$
  \tilde w_\k(0)=w_\kappa(\kappa^{-1})\equiv w_+>w_0>w_-=w_\k(0).
$$
This argument shows that positive $(\k,w_0)$-nodal solutions always arise by pairs, and ends the proof.
\end{proof}

However, to  make precise the number and location of the non-constant positive solutions, some monotonicity properties of the period map are needed. Those are direct consequences of the next result of technical nature.

\begin{lemma}
\label{le2.3}
The kinetic of the differential equation of \eqref{2.2},
$$
    f(w):=\l w-\frac{b\mu}{d}\frac{w}{1+w},
$$
is an {\rm A-B}-function, as discussed in Definition 1.4.1 of Schaaf \cite{S90}. Thus, $T(w_-)$ is decreasing with respect to $w_-$, i.e.
\begin{equation}
\label{2.22}
T'(w_-)<0\quad\hbox{for all}\;\; w_-\in(0,w_0),
\end{equation}
where $'$ stands for derivation with respect to $w_-$.
\end{lemma}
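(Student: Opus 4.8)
The plan is to recognize the kinetic $f$ as a member of the class singled out in Definition 1.4.1 of \cite{S90} and then to invoke Schaaf's theorem on the strict monotonicity of the associated time map, which gives \eqref{2.22} at once. Thus the proof reduces to checking the hypotheses of that definition for
$$
   f(w)=\l w-\frac{b\mu}{d}\,\frac{w}{1+w}=\frac{\l w\,(w-w_0)}{1+w},\qquad
   f'(w)=\l-\frac{b\mu}{d}\,\frac1{(1+w)^2},\qquad
   f''(w)=\frac{2b\mu}{d\,(1+w)^3}.
$$
The structural data are immediate. On $[0,\bar w]$, where $\bar w>w_0$ is the unique positive zero of the potential $F$ of \eqref{2.14} (i.e.\ the outer turning point of the homoclinic loop of $(0,0)$), the function $f$ is $C^\infty$, vanishes exactly at $0$ and at the nondegenerate center $w_0$ with $f'(0)<0<f'(w_0)$, it is \emph{strictly convex} ($f''>0$ throughout), and $F(0)=0=F(\bar w)$ while $\lim_{w\uparrow+\infty}F(w)=+\infty$ and $F''(0)<0<F''(w_0)$. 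This places $f$ in the admissible configuration contemplated by Definition 1.4.1 (saddle endpoint at $0$, interior center $w_0$, regular turning point at $\bar w$), so that only the sign/convexity condition characterising A-B functions remains to be verified.

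That condition is most transparent after straightening the energy: setting $\tfrac12 X^2=F(w)-F(w_0)$, $X=\operatorname{sgn}(w-w_0)\sqrt{2(F(w)-F(w_0))}$, one gets $dw/dX=X/f(w)=:g(X)$ and, writing $h=F(w_-)-F(w_0)$ for the energy of the orbit through $(w_-,0)$,
$$
   T(w_-)=\int_{-\pi/2}^{\pi/2} g\bigl(\sqrt{2h}\,\sin\t\bigr)\,d\t .
$$
Differentiating in $h$ and symmetrizing in $\t$ shows that $T$ is strictly decreasing in $w_-\in(0,w_0)$ — the direction being fixed by \eqref{2.16} and Lemma \ref{le2.2} — precisely when $g$ is strictly convex on the $X$-interval swept by the periodic orbits. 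Since $g'(X)=R(w)/f(w)^3$ with $R(w):=f(w)^2-2(F(w)-F(w_0))f'(w)$ and (using $F'=f$) $R'=-2(F-F(w_0))f''$, the convexity of $g$ is equivalent to
\begin{equation*}
   2\bigl(F(w)-F(w_0)\bigr)\bigl[\,3f'(w)^2-f(w)f''(w)\,\bigr]-3f(w)^2 f'(w)\;>\;0,
   \qquad w\in(0,\bar w)\setminus\{w_0\},
\end{equation*}
which is the instance Schaaf's A-B hypothesis takes in the present situation.

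I expect the proof of this last inequality to be the main obstacle, for two reasons. First, it is genuinely delicate at $w=w_0$: the two summands each vanish to second order there and their leading terms cancel, so a higher-order expansion at the center is needed (this is the usual $C^1$-regularity of $g$ at $X=0$). Second, substituting the explicit $f,f',f'',F$ leaves a one-variable expression involving $\log(1+w)$, not merely a rational function of $w$, so the verification is a transcendental estimate on $(0,\bar w)$ rather than a polynomial positivity check. The natural route is to exploit $f''>0$ together with the factorization $f=\tfrac{\l w(w-w_0)}{1+w}$ and the fact that $3(f')^2-f f''>0$ on $(0,w_0)$ (there $f<0$, $f''>0$), and to control $\log(1+w)$ by rational bounds on $(0,\bar w)$, or to differentiate once more and use monotonicity. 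Once this positivity is established, $f$ satisfies all the requirements of Definition 1.4.1 of \cite{S90}, hence it is an A-B function, and Schaaf's monotonicity theorem delivers the strict inequality \eqref{2.22}, completing the proof.
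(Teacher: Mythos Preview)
Your proposal misidentifies what Schaaf's A-B conditions actually are, and in consequence you never verify them. Definition~1.4.1 of \cite{S90} does \emph{not} ask for the transcendental inequality
\[
   2\bigl(F(w)-F(w_0)\bigr)\bigl[\,3f'(w)^2-f(w)f''(w)\,\bigr]-3f(w)^2 f'(w)>0
\]
that you derive from the energy-straightening $X\mapsto g(X)$; rather, it requires two \emph{local differential inequalities on $f$ alone}, split at the unique zero $\alpha=\sqrt{b\mu/(d\l)}-1$ of $f'$:
\[
  f'f'''-\tfrac{5}{3}(f'')^2<0 \quad (w>\alpha,\ \text{A-condition}),\qquad
  f f''-3(f')^2\le 0 \quad (w\in[0,\alpha],\ \text{B-condition}).
\]
These involve only $f,f',f'',f'''$ (no primitive $F$, no logarithm) and are checkable by direct computation here: with $f''(w)=\tfrac{2b\mu}{d(1+w)^3}$ and $f'''(w)=-\tfrac{6b\mu}{d(1+w)^4}$ one gets $f'f'''-\tfrac{5}{3}(f'')^2=-\tfrac{6b\mu\l}{d(1+w)^4}-\tfrac{2}{3}\tfrac{b^2\mu^2}{d^2(1+w)^6}<0$ for all $w>0$, while on $[0,\alpha]$ the B-inequality follows since $f(w)\le 0$ and $f''(w)>0$ give $ff''\le 0<3(f')^2$ except at $w=\alpha$ where one checks the sign directly. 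That is the whole verification; Theorem~2.1.3 of \cite{S90} then yields \eqref{2.22}.

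What you have done instead is to start \emph{re-proving} Schaaf's monotonicity theorem in this particular case, arriving at a condition that involves $F$ and hence $\log(1+w)$, which you yourself flag as the ``main obstacle'' and leave open. The entire point of Schaaf's A-B machinery is precisely to replace that hard inequality by the two elementary ones above. So the gap is concrete: you never check the hypotheses of the result you cite, and the alternative route you sketch is unfinished. Replace your energy-straightening paragraph by the two-line computation of the A- and B-conditions and the lemma is complete.
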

\begin{proof}
Let us prove that $f$ is an {\rm A-B}-function in the interval $J=[0,w_h]$, where $w_h>0$ is the unique positive zero of the potential energy $F(w)$. Since
$$
f'(w)=\l-\tfrac{b\mu}{d(1+w)^2}\quad \hbox{for all}\;\; w\in J,
$$
setting $\a:=\sqrt{\tfrac{b\mu}{d\l}}-1$, it becomes apparent that
\begin{equation*}
f'(w)\left\{
\begin{array}{ll}
<0 &\quad\hbox{if}\;\, w\in[0,\a),\\[3pt]
=0 &\quad\hbox{if}\;\, w=\a,\\[3pt]
>0 &\quad\hbox{if}\;\, w>\a.
\end{array}
\right.
\end{equation*}
According to Definition 1.4.1 of Schaaf \cite{S90}, $f$ is an {\rm A-B} function if
\begin{itemize}
\item $f'$ has only simple zeros on $J$;
\item $f$ is an A-function for $w>\a$, i.e.
$$
f'(w)f'''(w)-\tfrac{5}{3}(f''(w))^2<0\;\; \hbox{for all}\;\;w>\a;
$$
\item $f$ is a B-function for $w\in[0,\a)$, i.e.
$$
f(w)f''(w)-3(f'(w))^2\leq0\;\; \hbox{for all}\;\;w\in[0,\a].
$$
\end{itemize}
Since
$$
f''(w)=\frac{2b\mu}{d(1+w)^3}\quad\hbox{and}\quad f'''(w)=-\frac{6b\mu}{d(1+w)^4}\;\;\hbox{for all}\;\; w>0,
$$
we find that $f''(\a)>0$ and
$$
f'(w)f'''(w)-\frac{5}{3}(f''(w))^2=-\frac{6b\mu\l}{d(1+w)^4}+(6-\tfrac{20}{3})\frac{b^2\mu^2}{d^2(1+w)^6}<0
$$
for all $w>0$; in particular, for all $w>\a$. Thus, it remains to check that $f$ is a B-function. Indeed, by the definition of $\a$,
$$
\frac{1}{1+\a}=\sqrt{\frac{d\l}{b\mu}},
$$
and, since we are taking $\l<\tfrac{b\mu}{d}$, it becomes apparent that, for every $w\in[0,\a]$,
\begin{align*}
f(w)f''(w)-3(f'(w))^2 & <f(w)f''(w)=w\left(\l-\tfrac{b\mu}{d}\tfrac{1}{1+w}\right)\tfrac{2b\mu}{d(1+w)^3}\\&\leq
w\left(\l-\tfrac{b\mu}{d}\tfrac{1}{1+\a}\right)\tfrac{2b\mu}{d(1+w)^3}\\
& =w\sqrt{\l}\left(\sqrt{\l}-\sqrt{\tfrac{b\mu}{d}}\right)\tfrac{2b\mu}{d(1+w)^3}<0.
\end{align*}
Therefore, $f(w)$ is an A-B-function. Consequently, by Theorem 2.1.3 of Schaaf \cite{S90} (see also Opial \cite{O61}), \eqref{2.22} holds. The proof is complete.
\end{proof}

\subsection{Proof of Theorem \ref{th1.1}}

\noindent The monotonicity of the time map $T(w_-)$ with respect to $w_-$ established by Lemma \ref{le2.3}, entails  the existence of exactly two positive $(\k,w_0)$-nodal solutions, $w_\k$ and $\tilde w_\k$, for every
$\l\in (\l_{\k}^-,\l_{\k}^+)$ if $\mu>\mu_\k$. Conversely, assume that \eqref{2.2} has a positive  $(\k,w_0)$-nodal solution for some $\l\in (0,\frac{b\mu}{d})$. Then, thanks to Lemmas \ref{le2.3} and \ref{le2.2}, there exists $w_-\in(0,w_0)$ such that
$$
  T(w_-)=\tfrac{1}{\k}>T(w_0)= \tfrac{\pi}{\sqrt{\l\left(1-\tfrac{d\l}{b\mu}\right)}}.
$$
As, according to \eqref{2.5}, this estimate can be equivalently expressed as
$$
\tau_{0, \k}(\l)<0,
$$
necessarily,
$$
\mu>\mu_\k\quad \hbox{and}\quad \l\in (\l_{\k}^-,\l_{\k}^+).
$$
Therefore, thanks to Proposition \ref{pr2.1}, for every integer $\kappa\geq 1$, the problem \eqref{2.2} has a positive $(\k,w_0)$-nodal solution if and only if $\mu>\mu_\k$ and $\l\in (\l_{\k}^-,\l_{\k}^+)$. In such a case, by Lemma \ref{le2.3}, it has a unique positive $(\k,w_0)$-nodal solution, $w$, with $w(0)=w_-<w_0$ and a unique positive $(\k,w_0)$-nodal solution, $\tilde{w}$, with $\tilde{w}(0)=w_+\equiv w(\k^{-1}) >w_0$. Moreover, as, due to \eqref{2.5} and \eqref{2.6},
$$
  \tau_{0,\kappa}(\l_{\k}^\pm)=0\quad \hbox{and}\quad \lim_{\l\to\l_{\k}^\pm}T(w_0)=\tfrac{1}{\kappa},
$$
it becomes apparent that, for every $\mu>\mu_\k$,
$$
     \lim_{\l \downarrow \l_{\k}^-}w_- = w_0=\lim_{\l \uparrow \l_{\k}^+}w_-.
$$
Thus, the positive $(\k,w_0)$-nodal solutions of \eqref{2.2} bifurcate from $w_0$ at $\l=\l_{\k}^\pm$. Actually, since $T(w_-)$ varies continuously with respect to $\l$, also $w_-=w_-(\l)$ varies continuously with respect to $\l$ and, hence, the set of positive $(\k,w_0)$-nodal solutions of \eqref{2.2} consists of a closed loop emanating from $(\l,w_0)$ at $\l=\l_{\k}^\pm$ as it has been illustrated  in Figure \ref{Fig4} for $\kappa =1$ and $\kappa=2$. Subsequently, for every integer $\kappa\geq 1$, this loop will be denoted by $\mathscr{C}_\k$.
As, thanks to \eqref{2.6}, we already know that
$$
    \lim_{\mu\downarrow \mu_\k}\l_{\k}^\pm = \tfrac{b\mu}{2d},
$$
the closed loop $\mathscr{C}_\k\equiv \mathscr{C}_\k(\mu)$ shrinks to the single point
$(\l,w)=(\tfrac{b\mu}{2d},w_0)$ as $\mu\downarrow \mu_\k$. In other words,
$\mathscr{C}_\k$ bifurcates from $w_0=1$ at $\l=\frac{b\mu}{2d}$ as $\mu$ crosses the critical value $\mu_\k$. Note that, indeed, at $\l=\frac{b\mu}{2d}$, one has that
$$
  w_0(\l)= \frac{b\mu}{d\l}-1= 1.
$$
Lastly, since $\mu>\mu_n$ for all $n\in\{1,...,\k\}$ if $\mu >\mu_\k$, for every $n\in\{1,...,\k\}$ the set of positive $(n,w_0)$-nodal solutions of \eqref{2.2} consists of the closed loop $\mathscr{C}_n$, which emanates from $w_0$ at $\l=\l_{n}^\pm$. Note that, thanks to \eqref{2.9}, for every $n\in\{1,\dots,\k-1\}$, the loop $\mathscr{C}_{n+1}$ is nested into $\mathscr{C}_n$, as illustrated in Figure \ref{Fig4}.
\par
With all these ingredients the proof of Theorem \ref{th1.1} is completed easily.

\begin{remark}
\rm If $\k\in 2\mathbb{N}$, then the $(\k,w_0)$-nodal solutions of \eqref{2.2} are $1$-periodic solutions of the differential equation with minimal period $2\kappa^{-1}$. Thus, according to Theorem \ref{th1.1}, for every even integer $n\in\{1,\dots,\k\}$, the loop $\mathscr{C}_n$ consists of periodic solutions of minimal period $2n^{-1}$. In particular, $(\l_{n}^{\pm},w_0)$ is a bifurcation point to periodic solutions from $\mathscr{C}_0$.
\end{remark}

Figure \ref{Fig4} shows the global bifurcation diagram of \eqref{2.2} after Theorem \ref{th1.1} for a fixed $\mu\in(\mu_2,\mu_3]$. Note that, moving the parameter $\mu$ as in the right picture of Figure \ref{Fig2}, when $\mu>\mu_\k$ and $n\in\{1,\dots,\k\}$, the sets of $(n,w_0)$-solutions are structured in paraboloid-like sets, whose vertical sections are the loops $\mathscr{C}_n$.

\begin{figure}[h!]
\centering
\includegraphics[scale=0.6]{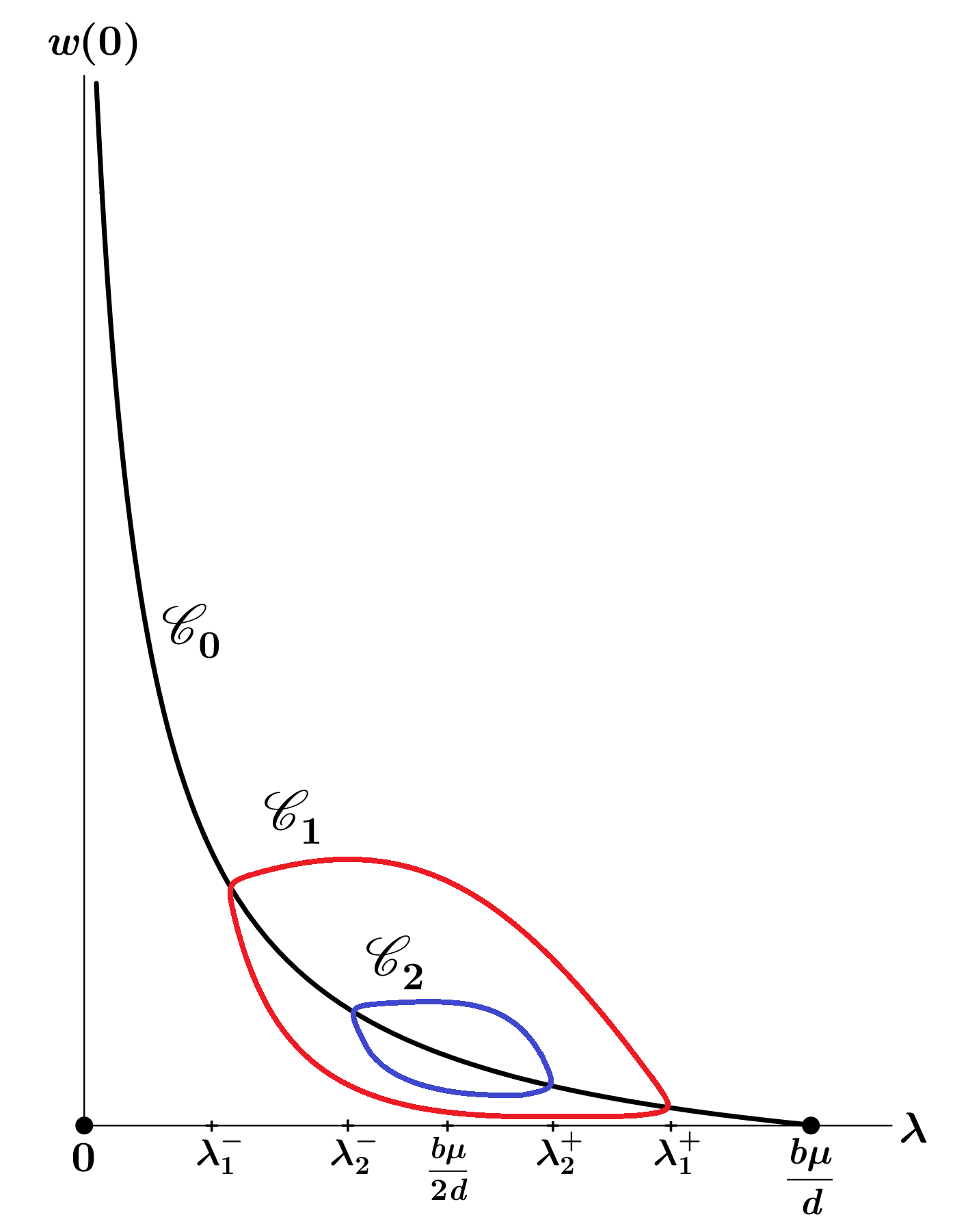}
\caption{The curve of constant solutions $\mathscr{C}_0$ and the loops $\mathscr{C}_1$ and $\mathscr{C}_2$
for $\mu\in(\mu_2,\mu_3]$. As $\mu$ further increases and crosses $\mu_3$, $\mathscr{C}_3$ bifurcates from
$\mathscr{C}_0$ at $\l=\frac{b\mu}{2d}$. And so on and so forth.}
\label{Fig4}
\end{figure}

\setcounter{equation}{0}
\section{Bifurcations of $(n,w_0)$-nodal solutions, $n\geq1$}
\label{sec3}

\noindent This section analyzes  the local bifurcation of $(n,w_0)$-nodal solutions from the constant solution curve $\mathscr{C}_0$. Although, according  to the phase plane analysis of Section \ref{sec2}, we already know that the loops of $(n,w_0)$-nodal solutions, $\mathscr{C}_n$, bifurcate supercritically from $\l_{n}^-$ and subcritically from $\l_{n}^+$, the combined use of the bifurcation theorem by Crandall--Rabinowitz, \cite{CR71}, together with the Exchange Stability Principle \cite{CR73}, provides us, in addition, with the dimensions of the unstable manifolds of these solutions, which are needed to obtain the appropriate non-degeneration conditions for getting the coexistence states of \eqref{1.iii} perturbing from the solutions of \eqref{1.iv}.
\par
To perform the local bifurcation analysis from the positive constant solution curve $w_0$, we make the change of variables
$$
  u:=w-w_0
$$
in \eqref{1.iv}, transforming  $w_0$ into $0$. Then, instead of looking for positive $(n,w_0)$-nodal solutions of \eqref{1.iv}, we are searching for $n$-nodal solutions of the equivalent problem
\begin{equation}
\label{3.i}
\left\{
\begin{array}{ll}
-u''=\l u\left(1-\frac{d\l}{b\mu}\frac{1}{\frac{d\l}{b\mu}u+1}\right)\quad \hbox{in}\;\;(0,1),\\[7pt]
u'(0)=u'(1)=0.
\end{array}
\right.
\end{equation}
Since $w\geq 0$, we have that $u\geq -w_0$ and, hence,
$$
\tfrac{d\l}{b\mu}u+1\geq 1-w_0\tfrac{d\l}{b\mu}=1-\left(\tfrac{b\mu}{d\l}-1\right)\tfrac{d\l}{b\mu}=
\tfrac{d\l}{b\mu}>0.
$$
Thus, the right-hand side of \eqref{3.i} is analytic in $\l$ and $u$.

\subsection{Local bifurcations from $(\l,u)=(\l_{n}^{\pm},0)$, $n\geq 1$.}
\label{subsec3.i}

\noindent Subsequently, for any given integer $j\geq 1$, we denote by $\mc{C}_{N}^j[0,1]$ the Banach space of the functions $u: [0,1]\to\mathbb{R}$ of class $\mc{C}^j$ in $[0,1]$ such that $u'(0)=u'(1)=0$ with the norm of $\mc{C}^j[0,1]$. Then, the solutions of \eqref{3.i} are given by the zeroes of the operator
\begin{align*}
\mathscr{F}\,:(0,\tfrac{b\mu}{d})\,\times\, &\mc{C}_{N}^2[0,1]\longrightarrow \mc{C}[0,1]\\
&\quad(\l,u)\mapsto \mathscr{F}(\l,u):=-u''-\l u\Big(1-\tfrac{d\l}{b\mu}\tfrac{1}{\tfrac{d\l}{b\mu}u+1}\Big),
\end{align*}
which is an analytic Fredholm operator that can be decomposed as
$$
\mathscr{F}(\l,u)=\mathscr{L}(\l)u+\mathscr{N}(\l,u),
$$
where
\begin{equation}
\label{3.ii}
\ms{L}(\l)u:=D_u\ms{F}(\l,0)u=-u''-\l \left(1-\tfrac{d\l}{b\mu}\right)u
\end{equation}
is a linear Fredholm operator of index $0$, and $\ms{N}(\l,u)$ is a nonlinear operator satisfying
$$
 \ms{N}(\l,u)=o(\|u\|)\quad \hbox{as}\;\; \|u\|\to0.
$$
The null-space $N[\ms{L}(\l)]$ is the linear space generated by the nontrivial solutions of the linear problem
\begin{equation}
\label{3.iii}
\left\{
\begin{array}{l}
-\varphi''=\l \left(1-\tfrac{d\l}{b\mu}\right)\varphi\quad \hbox{in}\;\;(0,1),\\[7pt]
\varphi'(0)=\varphi'(1),
\end{array}
\right.
\end{equation}\quad \hbox{in}\;\;(0,1),\\[7pt]
and the generalized spectrum of $\ms{L}(\l)$ is given by
$$
  \Sigma(\ms{L}):=\{\l\in\mathbb{R}\;:\;\dim N[\ms{L}(\l)]\geq1\}.
$$
Recalling \eqref{2.5}, we have that, for every integer $\ell\geq0$,
\begin{equation}
\label{3.iv}
    \l \left(1-\tfrac{d\l}{b\mu}\right)=(\ell\pi)^2-\tau_{0,\ell}(\l).
\end{equation}
Thus, since $(\ell\pi)^2$, $\ell\geq 0$, provides us with the set of eigenvalues of $-D^2$ in $(0,1)$
under Neumann boundary conditions, it becomes apparent that
$$
  \Sigma[\ms{L}(\l)]=\{\l\in\mathbb{R}\;:\; \tau_{0,\ell}(\l)=0\;\;\hbox{for some integer}\;\ell\geq 0\}.
$$
Consequently, according to the analysis of Section \ref{sec2}, we find that $\Sigma[\ms{L}(\l)]=\{0\}$ if $\mu\in(\mu_0,\mu_1)$, and, whenever $\mu\in(\mu_\k,\mu_{\k+1})$ for some integer $\k\geq1$,
\begin{equation}
\label{3.5}
   \Sigma[\ms{L}]:=\{\l_{n}^{\pm}\;:\;1\leq n\leq\k\}\quad\hbox{and}\quad N[\ms{L}(\l_{n}^{\pm})]=
   \mathrm{span\,}[\cos(n\pi x)]
\end{equation}
for all $n\in\{1,\dots,\k\}$.
\par
As $\mathscr{F}(\l,u)$ is analytic, as an application of the Implicit Function Theorem, the set of bifurcation points to nontrivial solutions from the trivial branch $(\l,0)$ is contained in the generalized spectrum  $\Sigma[\mf{L}(\l)]$. Moreover, by the main theorem of  Crandall and Rabinowitz \cite{CR71}, the converse holds also true. Indeed, setting
$$
\dot{\ms{L}}(\l)u:=D_{\l u}\ms{F}(\l,0)u=-\left(1-\tfrac{2d\l}{b\mu}\right)u=
 \dot{\tau}_{0,n}(\l) u
$$
and assuming that $\mu>\mu_\k$ for some integer $\k\geq 1$, the following transversality condition holds
\begin{equation}
\label{3.6}
\dot{\ms{L}}(\l_{n}^{\pm})\left( N[\ms{L}(\l_{n}^{\pm})]\right)\oplus R[\ms{L}(\l_{n}^{\pm})]=\mc{C}[0,1]
\end{equation}
for all $n\in\{1,...,\k\}$,  where we are denoting by $R[T]$ the range, or image,  of any linear operator $T$.
Arguing by contradiction, suppose that \eqref{3.6} fails for some integers $\kappa\geq 1$ and $1\leq n\leq \k$. Then, since $\tau_{0,n}(\l_n^\pm)=0$, it follows from \eqref{3.ii}, \eqref{3.iv} and  \eqref{3.5}, that there exists $\psi\in\mc{C}_{N}^2[0,1]$ such that
$$
   -\psi''-(n\pi)^2\psi=\dot{\tau}_{0,n}(\l_{n}^\pm) \cos(n\pi x).
$$
Multiplying this equation by $\cos(n\pi x)$ and integrating by parts in $[0,1]$, yields
$$
    0=\dot{\tau}_{0,n}(\l_{n}^\pm)\int_0^1\cos^2(n\pi x)\, dx,
$$
which implies $\dot{\tau}_{0,n}(\l_{n}^\pm)=0$ and contradicts \eqref{2.x}. Therefore,
as the transversality condition \eqref{3.6} holds for all $n\in \{1,...,\k\}$ if $\mu >\mu_\k$,
the next result holds  from the main theorem of Crandall and Rabinowitz \cite{CR71}.

\begin{theorem}
\label{th3.i}
Suppose $\mu>\mu_\k$ for some $\k\geq 1$. Then, for every integer $n\in\{1,...,\k\}$,
\begin{equation}
\label{3.7}
\left\{ \begin{array}{l}
N[\ms{L}(\l_{n}^{\pm})]=\mathrm{span\,}[\cos(n\pi x)],\\[5pt]
\dot{\ms{L}}(\l_{n}^{\pm})\left(N[\ms{L}(\l_{n}^{\pm})]\right)\oplus R[\ms{L}(\l_{n}^{\pm})]=\mc{C}[0,1].
\end{array}\right.
\end{equation}
Moreover, since
$$
  Y_n:=\left\{ y \in \mc{C}^2_N[0,1]\;:\; \int_0^1 y(x)\cos(n\pi x)\,dx =0\right\}
$$
is a closed subspace of $\mc{C}_{N}^2[0,1]$ such that
\begin{equation}
\label{3.8}
    N[\ms{L}(\l_{n}^{\pm})]\oplus Y_n=\mc{C}^2_N[0,1],
\end{equation}
there exist $\varepsilon>0$ and four (unique) analytic maps
$$
    \l_{n}^\pm:(-\e,\e)\rightarrow\mathbb{R},\qquad y_{n}^\pm:(-\e,\e)\rightarrow Y_n,
$$
such that $\l^\pm_{n}(0)=\l_{n}^{\pm}$, $y_{n}^\pm(0)=0$, and, for every $s\in(-\e,\e)$,
$$
    \ms{F}(\l_{n}^\pm(s),u_{n}^\pm(s))=0,\;\;\hbox{where}\;\; u_{n}^\pm(s):=s\left(\cos(n\pi x)+y_{n}^\pm(s)\right).
$$
Furthermore, there is  a neighborhood of $(\l_{n}^{\pm},0)$ in $\mathbb{R}\times \mc{C}_N^2[0,1]$,
denoted by $\ms{U}_{n}^\pm$, such that
$$
 (\l,u)=(\l_{n}^\pm(s),u_{n}^\pm(s)) \;\; \hbox{for some}\;\; s\in (-\e,\e)\;\;\hbox{if} \;\;\left\{
\begin{array}{ll} (\l,u)\in \ms{U}_{n}^\pm\cap\mathscr{F}^{-1}(0),\\[3pt]
(\l,u)\neq(\l,0).
\end{array}
\right.
$$
\end{theorem}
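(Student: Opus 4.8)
The plan is to deduce Theorem \ref{th3.i} as a direct application of the bifurcation theorem of Crandall and Rabinowitz \cite{CR71}, in its analytic version, at each of the points $(\l_{n}^{\pm},0)$, $n\in\{1,\dots,\k\}$. Recall from the preceding discussion that $\ms{F}$ is an analytic Fredholm map of index $0$ from $(0,\tfrac{b\mu}{d})\times\mc{C}_N^2[0,1]$ into $\mc{C}[0,1]$, with $\ms{F}(\l,0)=0$ for all $\l$, linearization $\ms{L}(\l)=D_u\ms{F}(\l,0)$ given by \eqref{3.ii}, and generalized spectrum $\Sigma[\ms{L}]=\{\l_{n}^{\pm}:1\leq n\leq\k\}$ whenever $\mu>\mu_\k$. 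The first line of \eqref{3.7} is precisely the identity \eqref{3.5}, and the transversality relation in the second line of \eqref{3.7} is exactly \eqref{3.6}, whose validity was established just above the statement by the contradiction argument resting on $\dot{\tau}_{0,n}(\l_{n}^{\pm})\neq0$, i.e. on \eqref{2.x}. Hence the structural hypotheses of \cite{CR71} at $(\l_{n}^{\pm},0)$ are already in place; it only remains to record the topological complement \eqref{3.8} and to run the Lyapunov--Schmidt reduction.

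For \eqref{3.8} I would argue as follows. Since $N[\ms{L}(\l_{n}^{\pm})]=\mathrm{span}[\cos(n\pi x)]$ is one–dimensional and $\int_0^1\cos^2(n\pi x)\,dx=\tfrac12\neq0$, the generator $\cos(n\pi x)$ does not belong to $Y_n$, so $N[\ms{L}(\l_{n}^{\pm})]\cap Y_n=\{0\}$. Conversely, every $y\in\mc{C}_N^2[0,1]$ splits as
$$
y=2\Big(\int_0^1 y\cos(n\pi x)\,dx\Big)\cos(n\pi x)+\Big(y-2\Big(\int_0^1 y\cos(n\pi x)\,dx\Big)\cos(n\pi x)\Big),
$$
the first summand lying in $N[\ms{L}(\l_{n}^{\pm})]$ and the second, by construction, in $Y_n$; moreover $Y_n$ is closed because $y\mapsto\int_0^1 y\cos(n\pi x)\,dx$ is a continuous functional on $\mc{C}_N^2[0,1]$. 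This establishes \eqref{3.8}, and I would note in passing that, by the Fredholm alternative for $\ms{L}(\l_{n}^{\pm})$, the range $R[\ms{L}(\l_{n}^{\pm})]$ is the codimension–one subspace of $\mc{C}[0,1]$ of functions $f$ with $\int_0^1 f\cos(n\pi x)\,dx=0$, which is what makes the projections in the reduction explicit.

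With \eqref{3.7} and \eqref{3.8} at hand, the Lyapunov--Schmidt reduction of \cite{CR71} applies at each $(\l_{n}^{\pm},0)$: substituting $u=s(\cos(n\pi x)+y)$ with $y\in Y_n$ and projecting $\ms{F}(\l,u)=0$ onto $R[\ms{L}(\l_{n}^{\pm})]$ and onto its complement yields a scalar bifurcation equation whose zero set near $s=0$ is, thanks to the transversality condition, a single curve $s\mapsto(\l_{n}^{\pm}(s),y_{n}^{\pm}(s))$ with $\l_{n}^{\pm}(0)=\l_{n}^{\pm}$ and $y_{n}^{\pm}(0)=0$; since $\ms{F}$ is analytic the reduced equation is analytic and the analytic Implicit Function Theorem delivers the analyticity of $\l_{n}^{\pm}$ and $y_{n}^{\pm}$, while the same reduction provides the neighborhood $\ms{U}_{n}^{\pm}$ in which the only nontrivial zeros of $\ms{F}$ are those on the branch $(\l_{n}^{\pm}(s),u_{n}^{\pm}(s))$. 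There is no genuine obstacle here—the substantive content, the transversality, has already been verified; the only mild point worth stating is that the two roots $\l_{n}^{-}<\l_{n}^{+}$ of the quadratic eigencurve $\tau_{0,n}$ must be handled separately, but both are simple with $\dot{\tau}_{0,n}(\l_{n}^{-})<0<\dot{\tau}_{0,n}(\l_{n}^{+})$ by \eqref{2.x}, so the argument runs verbatim at each, and the fact that the eigenfunction $\cos(n\pi x)$ has exactly $n$ zeros in $(0,1)$ identifies the bifurcating solutions near $(\l_{n}^{\pm},0)$ as $(n,w_0)$-nodal, in agreement with Theorem \ref{th1.1}.
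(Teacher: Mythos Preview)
Your proposal is correct and follows essentially the same approach as the paper's own proof: both observe that \eqref{3.7} was already established in the discussion preceding the theorem (the kernel identification \eqref{3.5} and the transversality \eqref{3.6}), note that \eqref{3.8} is then straightforward, and invoke the Crandall--Rabinowitz theorem \cite{CR71} for the remaining conclusions. Your version is simply more explicit, spelling out the decomposition verifying \eqref{3.8} and sketching the Lyapunov--Schmidt reduction, whereas the paper compresses these steps into a single sentence.
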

\begin{proof}
In the previous discussion,
\eqref{3.7} has been already shown if $\mu>\mu_\k$, and \eqref{3.8}
is obvious from \eqref{3.7}. The remaining assertions of the theorem are direct consequences of the main theorem of \cite{CR71}.
\end{proof}

Note that, for sufficiently small $s\in (-\e,\e)$,
$$
     (\l,u)=(\l_{n}^\pm(s),u_{n}^\pm(s))
$$
is a solution of \eqref{3.i} with exactly $n$ zeroes in $(0,1)$. Therefore,
$$
     (\l,w)=(\l_{n}^\pm(s),w_{n}^\pm(s)), \quad w_{n}^\pm(s):=w_0+u_{n}^\pm(s),
$$
provides us with a non-constant positive solution of \eqref{1.iv} crossing $w_0$ exactly $n$ times.

\subsection{Bifurcation directions from $(\l,u)=(\l_{n}^{\pm},0)$, $n\geq 1$.}
\label{subsec3.ii}

\noindent Suppose that $\mu>\mu_\k$, $\k\in\mathbb{N}$, and, for every integer $n\in\{1,...,\k\}$, let
$$
    \l^\pm(s)\equiv \l^\pm_{n}(s),\quad u^\pm(s)\equiv u_{n}^\pm(s),
$$
the analytic maps given by Theorem \ref{th3.i}. These functions admit asymptotic expansions of the type
\begin{equation}
\label{3.9}
\begin{split}
&\l^\pm(s):=\l_{n}^{\pm}+\eta_1^\pm s+\eta_2^\pm s^2+O(s^3),\\[3pt]
&u^\pm(s):=s[\cos(n\pi x)+y_1^\pm s+y_2^\pm s^2+O(s^3)],
\end{split}
\end{equation}
for some $\eta_1^\pm$, $\eta_2^\pm\in\mathbb{R}$ and $y_1^\pm$, $y_2^\pm \in Y_n$. The next result holds.
\begin{proposition}
\label{pr3.i}
Suppose $\mu>\mu_\k$ for some integer $\k\geq 1$, and $n\in\{1,...,\k\}$. Then,
\begin{equation}
\label{3.viii}
\eta_1^{\pm}=0,\qquad \eta_2^->0>\eta_2^+.
\end{equation}
Therefore, the $n$-nodal solutions bifurcate supercritically from  $(\l_{n}^-,0)$, and subcritically
from $(\l_{n}^+,0)$, in agreement with Theorem \ref{th1.1}.
\end{proposition}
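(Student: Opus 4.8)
The plan is to run the Lyapunov--Schmidt (bifurcation direction) computation along the analytic branches $(\l_n^{\pm}(s),u_n^{\pm}(s))$ delivered by Theorem \ref{th3.i}, tracking the relevant algebraic signs up to third order in $s$. First I would Taylor-expand the nonlinearity of \eqref{3.i} about $u=0$. Writing $\r:=\tfrac{d\l}{b\mu}$ and using the geometric expansion $\tfrac{1}{1+\r u}=1-\r u+\r^{2}u^{2}-\cdots$ for $|u|$ small, one gets
$$
\ms{F}(\l,u)=\ms{L}(\l)u-h(\l)u^{2}+k(\l)u^{3}+O(u^{4}),\qquad h(\l):=\tfrac{d^{2}\l^{3}}{b^{2}\mu^{2}},\quad k(\l):=\tfrac{d^{3}\l^{4}}{b^{3}\mu^{3}},
$$
uniformly for $u$ near $0$ in $\mc{C}_{N}^{2}[0,1]$ and $\l$ near $\l_n^{\pm}$, where $\ms{L}(\l)$ is given by \eqref{3.ii}. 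Substituting \eqref{3.9} into $\ms{F}(\l^{\pm}(s),u^{\pm}(s))\equiv0$ and matching powers of $s$, the $O(s)$ coefficient vanishes automatically, since $\ms{L}(\l_n^{\pm})\cos(n\pi x)=0$; indeed $\l_n^{\pm}\bigl(1-\tfrac{d\l_n^{\pm}}{b\mu}\bigr)=(n\pi)^{2}$ by \eqref{3.iv} because $\tau_{0,n}(\l_n^{\pm})=0$, so $\ms{L}(\l_n^{\pm})=-D^{2}-(n\pi)^{2}$.

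Next I would look at the $O(s^{2})$ coefficient. Setting $g(\l):=\l\bigl(1-\tfrac{d\l}{b\mu}\bigr)$, so that $g'(\l)=1-\tfrac{2d\l}{b\mu}=-\dot{\tau}_{0,n}(\l)$, it reads $\ms{L}(\l_n^{\pm})y_1^{\pm}=g'(\l_n^{\pm})\eta_1^{\pm}\cos(n\pi x)+h(\l_n^{\pm})\cos^{2}(n\pi x)$. Since $\ms{L}(\l_n^{\pm})$ is self-adjoint with $N[\ms{L}(\l_n^{\pm})]=\mathrm{span}[\cos(n\pi x)]$ by \eqref{3.5}, the Fredholm alternative forces the right-hand side to be $L^{2}(0,1)$-orthogonal to $\cos(n\pi x)$; because $\int_{0}^{1}\cos^{3}(n\pi x)\,dx=0$ for every $n\ge1$, this collapses to $g'(\l_n^{\pm})\eta_1^{\pm}\int_{0}^{1}\cos^{2}(n\pi x)\,dx=0$, and since $g'(\l_n^{\pm})=-\dot{\tau}_{0,n}(\l_n^{\pm})\neq0$ by \eqref{2.x}, one concludes $\eta_1^{\pm}=0$. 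Inserting $\eta_1^{\pm}=0$ back and using $\cos^{2}(n\pi x)=\tfrac12+\tfrac12\cos(2n\pi x)$ together with the fact that $\cos(2n\pi x)$ is a $-D^{2}$-eigenfunction for the eigenvalue $4(n\pi)^{2}$, the $O(s^{2})$ equation is solved explicitly, giving the unique $Y_n$-solution
$$
y_1^{\pm}=\frac{h(\l_n^{\pm})}{(n\pi)^{2}}\Bigl(-\tfrac12+\tfrac16\cos(2n\pi x)\Bigr).
$$

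The decisive step is the $O(s^{3})$ coefficient, namely $\ms{L}(\l_n^{\pm})y_2^{\pm}=g'(\l_n^{\pm})\eta_2^{\pm}\cos(n\pi x)+2h(\l_n^{\pm})\cos(n\pi x)\,y_1^{\pm}-k(\l_n^{\pm})\cos^{3}(n\pi x)$. Pairing with $\cos(n\pi x)$, using $\int_{0}^{1}\cos^{4}(n\pi x)\,dx=\tfrac38$ and the elementary identity $\int_{0}^{1}\cos^{2}(n\pi x)\,y_1^{\pm}\,dx=-\tfrac{5h(\l_n^{\pm})}{24(n\pi)^{2}}$, the solvability condition yields
$$
\eta_2^{\pm}=\frac{2}{g'(\l_n^{\pm})}\Bigl(\tfrac38\,k(\l_n^{\pm})+\tfrac{5\,h(\l_n^{\pm})^{2}}{12(n\pi)^{2}}\Bigr).
$$
Since $\l_n^{\pm}>0$ by \eqref{2.9}, both $h(\l_n^{\pm})$ and $k(\l_n^{\pm})$ are strictly positive, so the parenthesis is strictly positive; hence $\operatorname{sign}\eta_2^{\pm}=\operatorname{sign}g'(\l_n^{\pm})=-\operatorname{sign}\dot{\tau}_{0,n}(\l_n^{\pm})$. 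As \eqref{2.x} gives $\dot{\tau}_{0,n}(\l_n^{-})<0<\dot{\tau}_{0,n}(\l_n^{+})$, it follows that $\eta_2^{-}>0>\eta_2^{+}$, so that $\l^{-}(s)=\l_n^{-}+\eta_2^{-}s^{2}+O(s^{3})\ge\l_n^{-}$ and $\l^{+}(s)=\l_n^{+}+\eta_2^{+}s^{2}+O(s^{3})\le\l_n^{+}$ for $s$ near $0$, i.e. the $n$-nodal branch bifurcates supercritically at $\l_n^{-}$ and subcritically at $\l_n^{+}$, in agreement with Theorem \ref{th1.1}.

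The main obstacle I foresee is the careful third-order bookkeeping: one must check that the $\l$-dependence of $h$ and $k$ enters only at order $s^{4}$ (this is precisely where $\eta_1^{\pm}=0$ is used), so that the only genuinely new term at order $s^{3}$ is the quadratic self-interaction $2h(\l_n^{\pm})\cos(n\pi x)\,y_1^{\pm}$, for which the explicit formula for $y_1^{\pm}$ is indispensable; and then that the numerator appearing in $\eta_2^{\pm}$ is positive with no restriction on the parameters, which is exactly what makes the super/subcritical conclusion hold for \emph{every} $\mu>\mu_\k$ rather than on a subrange.
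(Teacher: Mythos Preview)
Your proof is correct and follows essentially the same route as the paper: substitute the expansions \eqref{3.9} into the equation, match powers of $s$, use $\int_0^1\cos^3(n\pi x)\,dx=0$ together with \eqref{2.x} to get $\eta_1^\pm=0$, solve explicitly for $y_1^\pm$, and then read off the sign of $\eta_2^\pm$ from the $O(s^3)$ solvability condition. The only difference is cosmetic---you Taylor-expand the nonlinearity in $u$ first (introducing the coefficients $h(\l),k(\l)$), whereas the paper expands the full rational expression in $s$ directly; your formula for $y_1^\pm$ and your final expression for $\eta_2^\pm$ agree with the paper's \eqref{3.19} and \eqref{3.21} after the substitutions $h(\l_n^\pm)=\l_n^\pm(d\l_n^\pm/b\mu)^2$, $k(\l_n^\pm)=\l_n^\pm(d\l_n^\pm/b\mu)^3$, $g'(\l_n^\pm)=-\dot\tau_{0,n}(\l_n^\pm)$.
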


\begin{proof}
By setting $\varphi_n:=\cos(n\pi\cdot)$, substituting  \eqref{3.9} into \eqref{3.i}, and dividing by $s$ the resulting identity, yields
\begin{equation}
\label{3.11}
\begin{split}
-[\varphi_n&+y_1^{\pm}s+y_2^{\pm}s^2+O(s^3)]''\\&=( \l_{n}^{\pm}+\eta_1^{\pm}s+\eta_2^{\pm}s^2+O(s^3))
\left[\varphi_n+y_1^{\pm}s+y_2^{\pm}s^2+O(s^3)\right]g(s),
\end{split}
\end{equation}
where
\begin{equation*}
g(s):=1-\frac{d\left( \l_{n}^{\pm}+\eta_1^{\pm}s+\eta_2^{\pm}s^2+O(s^3)\right)}{b\mu
\left(\frac{d( \l_{n}^{\pm}+\eta_1^{\pm}s+\eta_2^{\pm}s^2+O(s^3))}{b\mu}s
\left(\varphi_n+y_1^{\pm}s+y_2^{\pm}s^2+O(s^3)\right)+1\right)}
\end{equation*}
for sufficiently small $|s|$. Note that, for $s\simeq 0$,
\begin{equation}
\label{iii.12}
g(s)= 1-\frac{d}{b\mu}\left( \l_{n}^{\pm}+\eta_1^{\pm}s+\eta_2^{\pm}s^2+O(s^3)\right)h(s),
\end{equation}
where
\begin{align*}
h(s) & := \frac{1}{1+s\frac{d}{b\mu}\left( \l_{n}^{\pm}+\eta_1^{\pm}s+\eta_2^{\pm}s^2+O(s^3)\right)
\left(\varphi_n+y_1^{\pm}s+y_2^{\pm}s^2+O(s^3)\right)}\\[5pt] & = \frac{1}{1+\frac{d\l_{n}^\pm}{b\mu}\v_ns+
  \left(\frac{d\l_{n}^\pm}{b\mu}y_1^\pm+\frac{d\eta_1^\pm}{b\mu}\v_n\right)s^2+O(s^3)}.
\end{align*}
Moreover, for every $p$, $q\in\mathbb{R}$,
\begin{align*}
 \frac{1}{1+p s+q s^2+O(s^3)}& = 1-\left( p s+qs^2 +O(s^3)\right) +\left( p s+qs^2 +O(s^3)\right)^2+O(s^3) \\ & = 1-ps+\left(p^2-q\right)s^2+O(s^3)\qquad \hbox{as}\;\;s \to 0.
\end{align*}
Thus, for $s\simeq 0$,
\begin{equation}
\label{iii.13}
  h(s)= 1-\tfrac{d\l_{n}^\pm}{b\mu}\v_n s+\Big[\Big(\tfrac{d\l_{n}^\pm}{b\mu}\v_n\Big)^2-
\tfrac{d\l_{n}^\pm}{b\mu}y_1^\pm-\tfrac{d\eta_1^\pm}{b\mu}\v_n \Big]s^2 +O(s^3).
\end{equation}
Consequently, substituting \eqref{iii.13} into \eqref{iii.12} yields to
\begin{align*}
  g(s) = 1 & -\tfrac{d\l_{n}^\pm}{b\mu}+\Big[\Big(\tfrac{d\l_{n}^\pm}{b\mu}\Big)^2\v_n -
  \tfrac{d\eta_1^\pm}{b\mu} \Big]s \\ & + \Big[-\tfrac{d\l_{n}^\pm}{b\mu}
  \Big(\Big(\tfrac{d\l_{n}^\pm}{b\mu}\v_n\Big)^2-
   \tfrac{d\l_{n}^\pm}{b\mu}y_1^\pm-\tfrac{d\eta_1^\pm}{b\mu}\v_n \Big) \\
   & \hspace{3.8cm} +\tfrac{d\eta_1^\pm}{b\mu} \tfrac{d\l_{n}^\pm}{b\mu}\v_n -\tfrac{d\eta_2^\pm}{b\mu}
   \Big]s^2+O(s^3)\quad \hbox{as}\;\; s\to 0.
\end{align*}
Equivalently,
\begin{equation}
\label{iii.14}
\begin{split}
  g(s)  & = 1  -\tfrac{d\l_{n}^\pm}{b\mu}+\Big[\Big(\tfrac{d\l_{n}^\pm}{b\mu}\Big)^2\v_n -
  \tfrac{d\eta_1^\pm}{b\mu} \Big]s \\ &\; + \!\Big[\! -\!\Big(\tfrac{d\l_{n}^\pm}{b\mu}\Big)^3\v_n^2+
   \Big( \tfrac{d\l_{n}^\pm}{b\mu}\Big)^2 y_1^\pm +2 \tfrac{d\eta_1^\pm}{b\mu} \tfrac{d\l_{n}^\pm}{b\mu}\v_n -\tfrac{d\eta_2^\pm}{b\mu}
   \Big]s^2\!+\!O(s^3)
\end{split}
\end{equation}
as $s\to 0$. Now, going back to \eqref{3.11} and evaluating \eqref{3.11} at $s=0$ shows that
\begin{equation}
\label{iii.15}
    -\varphi_n''=\l_{n}^{\pm}\left(1-\tfrac{d\l_{n}^{\pm}}{b\mu}\right)\v_n=(n\pi)^2\varphi_n,
\end{equation}
which holds true by the definition of $\varphi_n$ and $\tau_{0,n}(\l)$ in \eqref{2.5}. Moreover, dividing by $s$ the identity \eqref{3.11} and  letting $s\to 0$ it follows from \eqref{iii.15}  that
\begin{equation}
\label{iii.16}
\begin{split}
\left[-D^2-(n\pi)^2\right]y_1^{\pm}&=-(y_1^{\pm})''-\l_{n}^{\pm}\left(1-\tfrac{d\l_{n}^{\pm} }{b\mu}\right)y_1^{\pm}\\&=\eta_1^{\pm}\varphi_n
\Big(1-2\tfrac{d\l_{n}^{\pm}}{b\mu}\Big)+\l_{n}^{\pm}
\left(\tfrac{d\l_{n}^{\pm}}{b\mu}\right)^2\varphi_n^2.
\end{split}
\end{equation}
Since
$$
   \int_0^1\varphi_n^2=\tfrac{1}{2},\qquad \int_0^1\varphi_n^3=0,
$$
multiplying \eqref{iii.16} by $\v_n$ and integrating by parts in $[0,1]$, it follows from
\eqref{iii.15} that
$$
0=\eta_1^{\pm}\left(1-2\tfrac{d\l_{n}^{\pm}}{b\mu}\right)=-\eta_1^{\pm}\dot{\tau}_{0,n}(\l_{n}^{\pm}).
$$
Therefore, owing to \eqref{2.x},
$$
\eta_1^{\pm}=0,
$$
and, hence, thanks to Theorem \ref{th3.i} and \eqref{iii.16}, $y_1^{\pm}$ is the unique solution of
\begin{equation}
\label{iii.17}
\left\{
\begin{array}{l}
 \left[-D^2-(n\pi)^2\right]y_1^{\pm} = \l_{n}^{\pm}
\left(\tfrac{d\l_{n}^{\pm}}{b\mu}\right)^2\varphi_n^2\varphi\quad \hbox{in}\;\;(0,1),\\[7pt]
(y_1^{\pm})'(0)=(y_1^{\pm})'(1)=0,
\end{array}\right.
\end{equation}
such that
\begin{equation}
\label{iii.18}
    \int_0^1\varphi_ny_1^{\pm}=0.
\end{equation}
Since
$$
\varphi_n^2(x)=\cos^2(n\pi x)=\tfrac{1}{2}[1+\cos(2n\pi x)],
$$
it is easily seen that
$$
p^{\pm}(x):=\tfrac{\l_{n}^{\pm}}{2}
\left(\tfrac{d\l_{n}^{\pm}}{n\pi b\mu}\right)^2 [\tfrac{1}{3}\cos(2n\pi x)-1]
$$
provides us with a particular solution of the differential equation of \eqref{iii.17}. Thus,
$$
y_1^{\pm}(x)=A\cos(n\pi x)+\tfrac{\l_{n}^{\pm}}{2}
\left(\tfrac{d\l_{n}^{\pm}}{n\pi b\mu}\right)^2[\tfrac{1}{3}\cos(2n\pi x)-1],
$$
with $A\in\mathbb{R}$, provides us with the set of solutions of \eqref{iii.17}. Lastly, by \eqref{iii.18}, it follows that
$$
0=\int_0^1 \varphi_ny_1^{\pm}=A\int_0^1\cos^2(n\pi x)\,dx.
$$
Hence, $A=0$ and
\begin{equation}
\label{3.19}
y_1^{\pm}(x)=p^{\pm}(x)=\tfrac{\l_{n}^{\pm}}{2}
\left(\tfrac{d\l_{n}^{\pm}}{n\pi b\mu}\right)^2[\tfrac{1}{3}\cos(2n\pi x)-1].
\end{equation}
Now, taking into account \eqref{iii.15}, \eqref{iii.16}, and $\eta_1^{\pm}=0$, dividing \eqref{3.11} by $s^2$ and letting $s\to 0$ in the resulting identity, it follows from \eqref{iii.14} that
\begin{equation}
\label{3.20}
\begin{split}
\left(-D^2-(n\pi)^2\right)y_2^{\pm} & =-(y_2^{\pm})''-\l_{n}^{\pm}
\left(1-\tfrac{d\l_{n}^{\pm}}{b\mu}\right)y_2^{\pm}
\\&=\eta_2^{\pm}\varphi_n\left(1-2\tfrac{d\l_{n}^{\pm}}{b\mu}\right)
+2\l_{n}^{\pm}\left(\tfrac{d\l_{n}^{\pm}}{b\mu}\right)^2\varphi_ny_1^{\pm}
\\ & \hspace{3.4cm} -\l_{n}^{\pm}\left(\tfrac{d\l_{n}^{\pm}}{b\mu}\right)^3\varphi_n^3.
\end{split}
\end{equation}
Multiplying by $\varphi_n$ and integrating by parts in \eqref{3.20} yields
\begin{equation}
\label{3.21}
\frac{\eta_2^{\pm}}{2}\dot{\tau}_{0,n}(\l_{n}^{\pm})=
2\l_{n}^{\pm}\left(\tfrac{d\l_{n}^{\pm}}{b\mu}\right)^2
\int_0^1\varphi_n^2y_1^{\pm}-\l_{n}^{\pm}\left(\tfrac{d\l_{n}^{\pm}}{b\mu}\right)^3\int_0^1\varphi_n^4,
\end{equation}
where $\dot{\tau}_{0,n}(\l_{n}^{\pm})$ comes from \eqref{2.x}. Since $\int_0^1\varphi_n^4>0$ and, thanks to \eqref{3.19},
\begin{align*}
\int_0^1\varphi_n^2y_1^{\pm} \,dx & = \tfrac{\l_{n}^{\pm}}{2}
\left(\tfrac{d\l_{n}^{\pm}}{n\pi b\mu}\right)^2\int_0^1 \cos^2(n\pi x)[\tfrac{1}{3}\cos(2n\pi x)-1]\, dx \\ & =\tfrac{\l_{n}^{\pm}}{4}
\left(\tfrac{d\l_{n}^{\pm}}{n\pi b\mu}\right)^2 \int_0^1[1+\cos(2n\pi x)][\tfrac{1}{3}\cos(2n\pi x)-1]\, dx\\&=\tfrac{\l_{n}^{\pm}}{4}
\left(\tfrac{d\l_{n}^{\pm}}{n\pi b\mu}\right)^2 \left(\tfrac{1}{3} \int_0^1\cos^2(2n\pi x)\, dx-1\right) \\ & =-\tfrac{5\l_{n}^{\pm}}{24}\left(\tfrac{d\l_{n}^{\pm}}{n\pi b\mu}\right)^2<0,
\end{align*}
it follows from \eqref{3.21} that
$$
  \mathrm{sign\,}\eta_2^{\pm}=-\mathrm{sign\,}\dot{\tau}_{0,n}(\l_{n}^{\pm}).
$$
Therefore, by \eqref{2.x}, the proof of \eqref{3.viii} is complete.
\end{proof}

\subsection{Dimensions of the unstable manifolds}

\noindent Thanks to Theorem \ref{th3.i} and the Exchange Stability Principle of Crandall and Rabinowitz
\cite{CR73}, whenever $\mu\in (\mu_\k,\mu_{\k+1})$, the dimensions of the unstable manifolds of the associated solutions of \eqref{2.2}
\begin{equation*}
     (\l,w)= (\l_{n}^\pm(s),w_0+u_{n}^\pm(s)), \quad 0<|s|<\e,\quad n\in\{1,...,\k\}
\end{equation*}
can be determined  from the dimensions of the unstable manifolds, $\ms{M}(w_0)$,  of $(\l,w_0)$ already collected in \eqref{2.12}. This is the main goal of this section.
\par
For every $n\in\{1,...,\k\}$ and sufficiently small $|s|>0$, the spectrum of the linearization of \eqref{2.2} at
\begin{equation}
\label{3.22}
    (\l_{n}^\pm(s),w_{n}^\pm(s)) \equiv (\l_{n}^\pm(s),w_0+u_{n}^\pm(s)), \;\;
0<|s|<\e,\;\; n\in\{1,...,\k\},
\end{equation}
consists of the eigenvalues of the linear problem
\begin{equation}
\label{3.23}
\left\{
\begin{array}{ll}
  \big[-D^2-\l_{n}^\pm +\frac{b\mu}{d}\frac{1}{(1+w_{n}^\pm )^2}\big]\varphi=\tau \varphi\quad \hbox{in}\;\;(0,1),\\[5pt]
\varphi'(0)=\varphi'(1)=0.
\end{array}
\right.
\end{equation}
More generally, for any given  $(n,w_0)$-solution, $(\l,w_{n})$, of \eqref{2.2} with $n\in \{1,...,\k\}$, the spectrum of the linearization of \eqref{2.2} at $(\l,w_{n})$ consists of the eigenvalues of the problem
\begin{equation}
\label{3.24}
\left\{
\begin{array}{ll}
\big[-D^2-\l+\frac{b\mu}{d}\frac{1}{(1+w_{n})^2}\big]\varphi=\tau \varphi\quad \hbox{in}\;\;(0,1),\\[7pt]
\varphi'(0)=\varphi'(1)=0.
\end{array}
\right.
\end{equation}
According to Theorem \ref{th1.1}, $(\l,w_{n})$ exists if and only if $\l\in(\l_{n}^-,\l_{n}^+)$. Actually, there are exactly two solutions of this type for each $\l\in(\l_{n}^-,\l_{n}^+)$.
It is folklore that \eqref{3.24} has an increasing sequence of algebraically simple eigenvalues
\begin{equation}
\label{3.25}
\tau_{n,\ell}(\l):=\tau_\ell\left(-\l+\tfrac{b\mu}{d}\tfrac{1}{(1+w_{n})^2}\right),\qquad\ell\geq0,
\end{equation}
whose associated eigenfunctions have exactly $\ell$ zeroes in $(0,1)$, respectively. Since the change of variables
$$
   w_{n}=w_0+u_{n}
$$
is linear, the eigenvalues \eqref{3.25} coincide with the ones of the linearization of \eqref{3.i} at $u_{n}$. The next  result collects the main properties of these eigenvalues.

\begin{theorem}
\label{th3.2}
Suppose $\mu\in(\mu_\k,\mu_{\k+1})$ for some integer $\k\geq1$, and $n\in\{1,\dots,\k\}$. Then, the following properties are satisfied:
\begin{enumerate}
\item[{\rm (a)}] There exists $\d>0$ such that, for every
$$
     \l\in \Lambda_n:=(\l_{n}^-,\l_{n}^-+\d)\cup(\l_{n}^+-\d,\l_{n}^+),
$$
the following estimates hold
\begin{equation}
\label{3.26}
     \tau_{n,0}(\l)<\cdots<\tau_{n,n-1}(\l)<0<\tau_{n,n}(\l).
\end{equation}
Equivalently, for sufficiently small $|s|$, the linearization of \eqref{2.2} at 
any solution $(\l_{n}^\pm(s),w_{n}^\pm(s))$ has, exactly, $n$ negative eigenvalues, i.e. the Morse
index of $w_{n}^\pm(s)$, denoted by $\ms{M}(w_{n}^\pm(s))$, satisfies
\begin{equation}
\label{3.27}
    \ms{M}(w_{n}^\pm(s))=n=\ms{M}(w_0)-1,
\end{equation}
where $\ms{M}(w_0)$ is the Morse index of $w_0$ at $\lambda\in\Lambda_{n}$ (see \eqref{2.12}).
\par
\item[{\rm (b)}] For every $\l\in(\l_{n}^-,\l_{n}^+)$,
\begin{equation}
\label{3.28}
     \tau_{n,n-1}(\l)\leq0\leq\tau_{n,n}(\l).
\end{equation}
\item[{\rm (c)}] $\tau_{n,n-1}(\l)$ and $\tau_{n,n}(\l)$ vanish, at most, finitely many times
in $(\l_{n}^-,\l_{n}^+)$.
\end{enumerate}
\end{theorem}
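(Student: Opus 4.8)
\emph{Assertion (a).} First I would evaluate the spectrum at the bifurcation point. Since $\tau_{0,n}(\lambda_n^{\pm})=0$, formula~\eqref{2.5} gives $\tau_{0,\ell}(\lambda_n^{\pm})=(\ell^2-n^2)\pi^2$ for every integer $\ell\geq0$, so the linearization of \eqref{2.2} at $(\lambda_n^{\pm},w_0)$ has exactly $n$ negative eigenvalues, an algebraically simple zero eigenvalue with eigenfunction $\cos(n\pi x)$, and all the remaining ones positive. By continuous dependence of the Neumann eigenvalues on the potential in $L^{\infty}(0,1)$, together with $w_n^{\pm}(s)\to w_0$ and $\lambda_n^{\pm}(s)\to\lambda_n^{\pm}$ as $s\to0$, for $|s|$ sufficiently small one gets $\tau_{n,\ell}(\lambda_n^{\pm}(s))<0$ when $0\leq\ell\leq n-1$ and $\tau_{n,\ell}(\lambda_n^{\pm}(s))>0$ when $\ell\geq n+1$, while $\tau_{n,n}(\lambda_n^{\pm}(s))\to0$. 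The crux is the sign of $\tau_{n,n}(\lambda_n^{\pm}(s))$ for $s\neq0$, which I would extract from the Exchange Stability Principle of Crandall and Rabinowitz \cite{CR73}: as $0$ is a simple eigenvalue of $\mathscr{L}(\lambda_n^{\pm})=D_u\mathscr{F}(\lambda_n^{\pm},0)$ and the transversality condition \eqref{3.7} holds, that principle yields that $\tau_{n,n}(\lambda_n^{\pm}(s))$ and $-s\,(\lambda_n^{\pm})'(s)\,\dot\tau_{0,n}(\lambda_n^{\pm})$ have the same zeros near $s=0$ and their quotient tends to $1$ as $s\to0$. By Proposition~\ref{pr3.i} one has $\eta_1^{\pm}=0$, hence $(\lambda_n^{\pm})'(s)=2\eta_2^{\pm}s+O(s^2)$ and $-s\,(\lambda_n^{\pm})'(s)\,\dot\tau_{0,n}(\lambda_n^{\pm})=-2\eta_2^{\pm}\,\dot\tau_{0,n}(\lambda_n^{\pm})\,s^2+O(s^3)$, which is \emph{positive} for small $s\neq0$ on both branches, because $\eta_2^->0>\eta_2^+$ by~\eqref{3.viii} and $\dot\tau_{0,n}(\lambda_n^-)<0<\dot\tau_{0,n}(\lambda_n^+)$ by~\eqref{2.x}. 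Thus $\tau_{n,n}(\lambda_n^{\pm}(s))>0$ for $0<|s|$ small, which with the preceding signs gives \eqref{3.26}, i.e. $\mathscr{M}(w_n^{\pm}(s))=n$; recalling from \eqref{2.12} that $\mathscr{M}(w_0)=n+1$ throughout $\Lambda_n$ for $\delta$ small, this proves \eqref{3.27}. I expect the delicate point to be precisely this use of the Exchange Stability Principle for the crossing of a \emph{non-principal} simple eigenvalue, which is nevertheless legitimate, since that principle governs any simple eigenvalue near zero and the data it requires, the coefficients $\eta_2^{\pm}$ and the slopes $\dot\tau_{0,n}(\lambda_n^{\pm})$, are already available.

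\emph{Assertion (b).} Let $(\lambda,w_n)$ be any $(n,w_0)$-solution of \eqref{2.2} with $\lambda\in(\lambda_n^-,\lambda_n^+)$. Differentiating its equation, the function $\psi:=w_n'$ solves $(-D^2+V)\psi=0$ in $(0,1)$, with $V:=\frac{b\mu}{d}(1+w_n)^{-2}-\lambda$ the potential of \eqref{3.24}, and it satisfies $\psi(0)=\psi(1)=0$; moreover, by Theorem~\ref{th1.1} and the phase-plane analysis of Section~\ref{sec2}, $w_n$ attains its alternating extrema exactly at $x=0,\tfrac{1}{n},\dots,1$, so $\psi$ has exactly $n-1$ zeros in $(0,1)$. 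Hence $0$ is a Dirichlet eigenvalue of $-D^2+V$ on $(0,1)$ whose eigenfunction has $n-1$ interior zeros, i.e. the $(n-1)$-th Dirichlet eigenvalue vanishes; by the classical interlacing of Dirichlet and Neumann eigenvalues, $\tau_{n-1}^{N}\leq\tau_{n-1}^{D}\leq\tau_{n}^{N}$, this gives at once $\tau_{n,n-1}(\lambda)\leq0\leq\tau_{n,n}(\lambda)$. (Alternatively, $\tau_{n,n-1}(\lambda)\leq0$ follows directly from the variational characterization by testing with the $n$-dimensional subspace of $H^1(0,1)$ spanned by the restrictions of $\psi$ to the intervals $(\tfrac{j}{n},\tfrac{j+1}{n})$, on which, using integration by parts and $\psi''=V\psi$, the Rayleigh quotient of $-D^2+V$ vanishes identically.)

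\emph{Assertion (c).} The loop $\mathscr{C}_n$ is a real-analytic closed curve: near each bifurcation point $(\lambda_n^{\pm},w_0)$ it is parametrized analytically by the Crandall--Rabinowitz parameter $s$ furnished by Theorem~\ref{th3.i}, while away from those two points it is parametrized analytically by $\lambda$ through the initial value $w(0)$, which on each of the two branches depends analytically on $\lambda$ by the analytic version of the Implicit Function Theorem applied to the relation $n\,T(w_-)=1$ (here $T$ is real-analytic with $T'\neq0$ by Lemma~\ref{le2.3}) and by the analytic dependence of the solution of the associated Cauchy problem on its data. Parametrizing $\mathscr{C}_n$ by $\sigma$ over a circle, the potential $V(\sigma,\cdot)=\frac{b\mu}{d}(1+w(\sigma,\cdot))^{-2}-\lambda(\sigma)$ depends real-analytically on $\sigma$ with values in $\mc{C}[0,1]$, so the algebraically simple, isolated eigenvalues $\tau_{n,n-1}$ and $\tau_{n,n}$ of $-D^2+V(\sigma,\cdot)$ are real-analytic functions of $\sigma$ on the whole circle. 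Neither of them is identically zero, since by Assertion (a) one has $\tau_{n,n-1}<0$ and $\tau_{n,n}>0$ for $\sigma$ near the parameters of the two bifurcation points; because a real-analytic function on a compact connected one-manifold that is not identically zero has only finitely many zeros, projecting onto the $\lambda$-axis proves (c). The only subtlety here is the real-analyticity of $\mathscr{C}_n$ across the fold-type bifurcation points, which is exactly what the analytic parametrization of Theorem~\ref{th3.i} provides.
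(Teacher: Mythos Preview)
Your proof of Part~(a) follows the paper's route exactly: continuity of the eigenvalues for $\ell\neq n$, and the Exchange Stability Principle of \cite{CR73} together with Proposition~\ref{pr3.i} and \eqref{2.x} to pin down the sign of the critical eigenvalue $\tau_{n,n}$ along the bifurcating branch. Part~(c) is also in the same spirit (analyticity of the simple eigenvalues along $\mathscr{C}_n$, plus Part~(a) to exclude identical vanishing), though you organise it slightly differently by viewing $\mathscr{C}_n$ as a compact analytic curve rather than working with $\lambda$ on the open interval and invoking Part~(a) near the endpoints.

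The genuine departure is in Part~(b). The paper does \emph{not} use the derivative trick; it first proves that $w_n(\lambda)$, and hence each $\tau_{n,\ell}(\lambda)$, depends analytically on $\lambda\in(\lambda_n^-,\lambda_n^+)$ (via the analytic Implicit Function Theorem applied to the energy level and Kato's perturbation theory), and then argues by contradiction: if \eqref{3.28} failed, one of the analytic functions $\tau_{n,n-1}(\lambda)$ or $\tau_{n,n}(\lambda)$ would change sign at some $\lambda_0$, hence would vanish there to odd order, so by the algebraic multiplicity theory of \cite{LG01,LGMC07} a secondary bifurcation from $(\lambda_0,w_n)$ would occur, contradicting the exact multiplicity in Theorem~\ref{th1.1}. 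Your argument is more direct and self-contained: observing that $\psi=w_n'$ solves $(-D^2+V)\psi=0$ with Dirichlet conditions and has exactly $n-1$ interior zeros (from the time-map construction), you identify $0$ as the $(n-1)$-th Dirichlet eigenvalue and conclude via the classical Dirichlet--Neumann interlacing. This avoids both the analyticity machinery and the appeal to Theorem~\ref{th1.1}; in exchange, the paper's route yields the analyticity of $\tau_{n,\ell}(\lambda)$ as a by-product, which it then reuses for Part~(c), whereas you must establish that analyticity separately.
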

\begin{proof}
The inclusion operator $J: \mc{C}_{N}^2[0,1]\hookrightarrow \mc{C}[0,1]$ satisfies
\begin{equation}
\label{3.29}
J\v_n =\v_n \notin R[\ms{L}(\l_{n}^{\pm})],
\end{equation}
where $\v_n(x)=\cos(n\pi x)$ is the generator of $N[\ms{L}(\l_{n}^{\pm})]$. Indeed, according to
Section \ref{subsec3.i}, if \eqref{3.29} is not true, then there exists $\psi\in\mc{C}_{N}^2[0,1]$ such that
$$
-\psi''-(n\pi)^2\psi =\varphi_n.
$$
Multiplying this identity  by $\varphi_n$ and integrating by parts in $[0,1]$ gives a contradiction.
Thus, \eqref{3.29} holds. Moreover, by \eqref{3.ii} and \eqref{3.iv},  we have that
$$
\ms{L}(\l)\varphi_n=[(n\pi)^2-\l(1-\tfrac{d\l}{b\mu})]\varphi_n.
$$
So,
$$
a(\l):=(n\pi)^2-\l(1-\tfrac{d\l}{b\mu})=\tau_{0,n}(\l)
$$
provides us with the perturbed eigenvalue of $\ms{L}(\l)$ from  $a(\l_{n}^\pm)=0$; the perturbed
eigenfunction is $\v_n$ for all $\l \in (0,\tfrac{b\mu}{d})$. In general, these (analytic) perturbations
are guaranteed by Lemma 2.4.1(1) of \cite{LG01}. Since
$$
   \ms{G}(\l,w):=-w''-\lambda w+ \frac{b\mu}{d}\frac{w}{1+w},\quad (\l,w)\in \mathbb{R}\times \mc{C}_N^2[0,1], \qquad w\geq -\tfrac{1}{2},
$$
is analytic, by Lemma 2.4.1(2) of \cite{LG01}, there exist $\e>0$ and two (unique) analytic maps
$\varrho^\pm:(-\e,\e)\to \mathbb{R}$ and $\Phi^\pm:(-\e,\e)\to \mc{C}_N^2[0,1]$ such that
$\varrho^\pm(0)=0$, $\Phi^\pm(0)=\v_n$, and, for every $s\in(-\e,\e)$,
$$
\Phi^\pm(s)-\v_n \in Y_n,\quad D_u\ms{G}(\l_{n}^\pm(s),w_{n}^\pm(s))\Phi^\pm(s)=\varrho^\pm(s)\Phi^\pm(s),
$$
where $Y_n$ stands for the $L^2$-orthogonal of $\v_n$ in $\mc{C}_N^2[0,1]$. Thus, $\varrho^\pm(s)$ is the perturbed eigenvalue of $0$ along the bifurcating branch $(\l_{n}^\pm(s),w_{n}^\pm(s))$, $|s|<\e$. According to the Exchange Stability Principle
of Crandall and Rabinowitz \cite{CR73} (see Theorem 2.4.2 of \cite{LG01}),
$$
\lim_{\substack{s\to 0\\
\varrho^{\pm}(s)\neq 0}}
\frac{-s\dot \l_{n}^\pm(s)\dot a(\l_{n}^\pm)}{\varrho^\pm(s)}=1.
$$
In particular, the function $\varrho^\pm(s)$ has the same zeroes as $-s \dot \l_{n}^\pm(s) \dot a(\l_{n}^\pm)$ and, whenever $\varrho^\pm(s)\neq 0$, the same sign. Thus, since $\eta_1^\pm=0$ and
$$
    \dot a(\l_{n}^{\pm})=\dot\tau_{0,n}(\l_{n}^{\pm}),\quad \dot \l_{n}^\pm(s) =2\eta_2^\pm s + O(s^2),
$$
we find that
$$
\mathrm{sign\,} \varrho^\pm(s) = \mathrm{sign\,}\left(-s^2\eta_2^\pm \dot\tau_{0,n}(\l_{n}^{\pm})\right)
$$
for sufficiently small $|s|$. Consequently, as
$$
\mathrm{sign\,} \dot\tau_{0,n}(\l_{n}^{\pm})=\pm 1,
$$
it becomes apparent that
$$
  \mathrm{sign\,} \varrho^\pm(s) = -\mathrm{sign\,}(\pm \eta_2^\pm).
$$
As, due to Proposition \ref{pr3.i}, $\eta_2^->0>\eta_2^+$, we can conclude that $\varrho^\pm(s)>0$ for sufficiently 
small $|s|$. Therefore, $\ms{M}(w_{n}^\pm(s))=n$. According to \eqref{2.12}, this ends the proof of \eqref{3.27}.
\par
The proof of the first assertion of Part (a) follows readily from Theorem \ref{th1.1} and the uniqueness assertion in
Theorem \ref{th3.i}, as the curve $(\l_{n}^-(s),w_{n}^-(s))$, $s\approx 0$, provides us with the two
$(n,w_0)$-nodal solutions of \eqref{1.iv} in a right-neighborhood of $\l_{n}^-$, and, similarly, $(\l_{n}^+(s),w_{n}^+(s))$, $s\approx 0$, are the two  $(n,w_0)$-nodal solutions of \eqref{1.iv} in a left-neighborhood of $\l_{n}^+$. This concludes the proof of Part (a).
\par
The proof of Part (b) relies on the analyticity of the eigenvalues $\tau_{n,\ell}(\l)$ with respect to
$\l$ for each pair of integers $n\geq 0$ and $\ell\geq 0$. This is obvious for $n=0$. So, suppose $n\geq 1$ and note that the potential energy introduced in  \eqref{2.14},
$$
F(\l,w)=\tfrac{\l}{2}w^2-\tfrac{b\mu}{d}\left[w-\ln(1+w)\right],\qquad w>-1,
$$
is analytic with respect to $\l$ and $w>-1$. Thus, also is analytic the function
$$
    G(\l,w):=F(\l,w)-F(\l,w_{n,\pm}),
$$
where $0<w_{n,-}<w_0<w_{n,+}$ are the unique values of $w$ satisfying
$$
   \mathscr{E}(w_n,z)=F(\l,w_{n,\pm}).
$$
Moreover, since $ G(\l,w_{n,\pm})=0$ and
$$
  \frac{\p G}{\p w}(\l,w_{n,\pm})=\frac{\p F}{\p w}(\l,w_{n,\pm})=\l w_{n,\pm}-\frac{b\mu}{d}
  \frac{w_{n,\pm}}{1+w_{n,\pm}}\neq 0,
$$
by the Implicit Function Theorem, $w_{n,\pm}=w_{n,\pm}(\l)$ is analytic with respect to $\l$. Therefore, since either $w_{n}(0)=w_{n,-}(\l)$, or $w_{n}(0)=w_{n,+}(\l)$, are the initial values of the underlying  Cauchy problem, by Peano's Differentiability Theorem, we can infer that the $(n,w_0)$-nodal solution $w_{n}:=w_{n}(\cdot,\l)$ also is an analytic function of $\l$ in $\mc{C}_N^2[0,1]$. Therefore, by a classical result on perturbation from simple eigenvalues of Kato \cite{Ka60}, it follows from \eqref{3.25} that, for every integer $n\geq 1$ and any $(n,w_0)$-nodal solution, $w_n$, of \eqref{2.2}, $\tau_{n,\ell}(\l)$ also is an analytic function of $\l \in(\l_{n}^-,\l_{n}^+)$.
\par
According to Part (a), we already know that
$$
   \tau_{n,n-1}(\l)<0<\tau_{n,n}(\l)
$$
for $\l>\l_{n}^-$ sufficiently close, and   $\l<\l_{n}^+$ sufficiently close. In particular,
\eqref{3.28} holds for these $\l$'s. By the Identity Principle, $\tau_{n,n-1}(\l)$ and
$\tau_{n,n}(\l)$ vanish, at most, at finitely many points in the interval $(\l_{n}^-,\l_{n}^+)$.
Moreover, \eqref{3.28} must be respected for all $\l \in (\l_{n}^-,\l_{n}^+)$, as, otherwise, either
$\tau_{n,n-1}(\l)$, or $\tau_{n,n}(\l)$, must change sign in $(\l_{n}^-,\l_{n}^+)$, and this is impossible. Indeed, suppose that, e.g., $\tau_{n,n-1}(\l)$ changes sign at some $\l_0\in (\l_{n}^-,\l_{n}^+)$. Then, by analyticity, there exists an odd $\nu=2m+1 \geq 1$ such that
$$
   \frac{{\rm d}^\nu \tau_{n,n-1}}{{\rm d}\l^\nu}(\l_0)\neq 0.
$$
Thus, according to Theorem 4.4.3 of \cite{LGMC07}, the generalized algebraic multiplicity
of \cite{LG01} (going back to Esquinas and L\'{o}pez-G\'{o}mez \cite{ELG87} for transversal eigenvalues), satisfies
$$
\chi[\ms{L}_n(\l);\l_0]=2m+1\in 2\mathbb{N}+1,
$$
where
$$
   \ms{L}_n(\l):=-D^2-\l+\tfrac{b\mu}{d}\tfrac{1}{(1+w_{n}(\l))^2}.
$$
Therefore, by  Theorem 4.3.4 of \cite{LG01},  a bifurcation to non-constant positive solutions of \eqref{1.iv} from $(\l,w)=(\l_0,w_{n})$ occurs, which contradicts the uniqueness established by Theorem \ref{th1.1} and ends the proof of Part (b). Finally, Part (c) holds by analyticity.
\end{proof}

\setcounter{equation}{0}
\section{Multiplicity of coexistence states for \eqref{1.iii}}
\label{sec4}

\noindent This section analyzes the existence of coexistence states for the predator-prey model
\begin{equation}
\label{4.1}
\left\{
\begin{array}{ll}
-w''=\lambda w - \varepsilon a(x)w^2 - b\frac{wv}{1+w}\qquad \hbox{in}\;\;(0,1),\\[6pt]
-v''=\mu v -dv^2+ \varepsilon c(x)\frac{wv}{1+w}\qquad\,\,\,\,\hbox{in}\;\;(0,1),\\[7pt]
w'(0)=w'(1)=0,\;\; v'(0)=v'(1)=0
\end{array}
\right.	
\end{equation}
for sufficiently small $\e\geq 0$. These solutions  will be constructed by perturbation from
the coexistence states of the limiting system
\begin{equation}
\label{4.2}
\left\{
\begin{array}{ll}
-w''=\lambda w - b\frac{wv}{1+w}\qquad \hbox{in}\;\;(0,1),\\[6pt]
-v''=\mu v -dv^2 \qquad\quad\;\hbox{in}\;\;(0,1),\\[7pt]
w'(0)=w'(1)=0,\;\; v'(0)=v'(1)=0,
\end{array}
\right.	
\end{equation}
which is \eqref{4.1} with $\e=0$. Subsequently, we set $\mathbb{R}^+:=[0,+\infty)$ and consider the differential operator
\begin{equation*}
\ms{G}:\mathbb{R}^+\times\mathbb{R}^+\times\mathbb{R}^+\times \mc{C}_{N}^2[0,1]\times\mc{C}_{N}^2[0,1]\longrightarrow \mc{C}[0,1]\times\mc{C}[0,1],
\end{equation*}
defined, for every $\varepsilon$, $\l$, $\mu\in\mathbb{R}^+$ and $w,v\in \mc{C}_{N}^2[0,1]$, by
\begin{equation}
\label{4.3}
\ms{G}(\varepsilon,\l,\mu,w,v):=
\begin{pmatrix}
-w''-\l w+\varepsilon a(x)w^2+b\frac{wv}{1+w}\\[6pt]
-v''-\mu v+dv^2-\varepsilon c(x)\frac{wv}{1+w}
\end{pmatrix},
\end{equation}
whose zeroes are the solutions of \eqref{4.1}; the  zeroes of $\ms{G}$ with $\varepsilon=0$ are the solutions of \eqref{4.2}. In Section \ref{sec2}, we have constructed the set of non-constant coexistence states of \eqref{4.2}, which are given by
\begin{equation}
\label{4.4}
(\e,\l,\mu,w,v)=(0,\l,\mu,w_{n},\tfrac{\mu}{d}),\qquad \l\in(\l_{n}^-,\l_{n}^+)
\end{equation}
for every integer $n\in \{1,...,\k\}$ if $\mu \in (\mu_\k,\mu_{\k+1}]$, where $w_{n}$ is any of the two positive $(n,w_0)$-nodal solutions of the first equation of \eqref{4.2} for $v=\tfrac{\mu}{d}$. We already know that they  bifurcate supercritically at $\l=\l_{n}^-$ and subcritically at $\l=\l_{n}^+$ from the constant coexistence state $(w,v)=(w_0,\frac{\mu}{d})$.
\par
A given coexistence state $(w_n,\frac{\mu}{d})$, $n\geq 0$, of \eqref{4.2} is said to be non-degenerate if the linearized operator $D_{(w,v)}\ms{G}(0,\l,\mu,w_{n},\tfrac{\mu}{d})$ cannot admit a zero eigenvalue, i.e., if the eigenvalue problem
\begin{equation}
\label{4.5}
\left\{
\begin{array}{ll}
\begin{pmatrix}
-D^2-\l+\frac{b\mu}{d}\frac{1}{(1+w_n)^2} & b\frac{w_n}{1+w_n}\\[6pt]
0 & -D^2+\mu
\end{pmatrix}
\begin{pmatrix}
\varphi\\[6pt]
\psi
\end{pmatrix}
=\tau
\begin{pmatrix}
\varphi\\[6pt]
\psi
\end{pmatrix},
\\[15pt]
\varphi'(0)=\varphi'(1)=0,\quad \psi'(0)=\psi'(1)=0,
\end{array}
\right.
\end{equation}
does not admit any zero eigenvalue. Suppose that $\tau=0$ is an eigenvalue of \eqref{4.5} for some
$\l\in(0,\frac{b\mu}{d})$ and $\mu>0$.  Then,
$$
  -\psi''+\mu\psi=0
$$
and, since $\mu>0$, necessarily $\psi=0$. Consequently, $\v$ must solve
\begin{equation*}
\left\{
\begin{array}{ll}
\big(-D^2-\l+\frac{b\mu}{d}\frac{1}{(1+w_n)^2}\big)\varphi=0\quad \hbox{in}\;\;(0,1),\\[5pt]
\varphi'(0)=\varphi'(1)=0.
\end{array}
\right.
\end{equation*}
Therefore, for some integer $\ell\geq 0$,
\begin{equation}
\label{4.6}
  \tau_{n,\ell}(\l)=0.
\end{equation}
Therefore,  the coexistence state $(w_n,\frac{\mu}{d})$ is non-degenerate if $\tau_{n,\ell}(\l)\neq 0$ for all $\ell\geq 0$. Consequently, according to Theorem \ref{th3.2}, for every $\mu >\mu_\k$ and
$n\in\{1,...,\k\}$, there is a finite set of singular values of $\l$,
$$
\ms{S}_n \subset (\l_{n}^-,\l_{n}^+),
$$
possibly empty, such that, for every $\l\in(\l_{n}^-,\l_{n}^+)\setminus \ms{S}_n$,
the coexistence state $(\l,\mu,w_n,\tfrac{\mu}{d})$ of \eqref{4.2} is non-degenerate. Moreover,
by the analysis of Section \ref{sec2}, the constant coexistence state
$$
(\l,\mu,w_0,\tfrac{\mu}{d}),\qquad\l\in(0,\tfrac{b\mu}{d})
$$
of \eqref{4.2} is non-degenerate if and only if $\l\neq\l_{n}^{\pm}$ for all integer $n\geq 1$. Thus, setting
\begin{equation*}
    \mathscr{A}:=\big[0,\tfrac{b\mu}{d}\big]\setminus\bigcup_{n=1}^\k(\ms{S}_n\cup\{\l_{n}^{\pm}\}),
\end{equation*}
we find that, for every $\l\in\ms{A}$, any coexistence state of \eqref{4.2} is non-degenerate if
$\mu\in (\mu_\k,\mu_{\k+1})$. In such a case, we will denote
\[
\mathscr{A}_{n}:=
\begin{cases}
\left[\mathscr{A}\cap(\l_{n}^-,\l_{n}^+)\right]
\setminus (\l_{n+1}^-,\l_{n+1}^+)\quad&\mbox{if}\quad n\in\{1,\dots,\k-1\},\\
\mathscr{A}\cap(\l_{\k}^-,\l_{\k}^+)\quad&\mbox{if}\quad n=\k.
\end{cases}
\]
The next result establishes the existence of an arbitrarily large number of
coexistence states for the problem \eqref{4.1} for sufficiently large $\mu$ and sufficiently small $\e>0$.

\begin{theorem}
\label{th4.1}
Suppose that $\mu\in(\mu_\k,\mu_{\k+1})$ for some integer $\k\geq1$, and $n\in\{1,\dots,\k\}$. Then,
for every $\l\in\mathscr{A}$, there exists $\varepsilon_0=\e_0(\l)>0$ such that, for each $\varepsilon\in(0,\varepsilon_0)$, the problem \eqref{4.1} possesses, at least, one coexistence state.
Moreover,  $\e_0$ can be chosen sufficiently small so that, for every $\l\in\ms{A}_n$ and
$\e\in (0,\e_0)$, \eqref{4.1} can admit, at least, $2n+2$ coexistence states. Moreover,
as a consequence of the Implicit Function Theorem, $2n+1$ among these coexistence states perturb, as $\e>0$ perturbs from $0$, from each of the coexistence states
$$
   (w_0,\tfrac{\mu}{d}),\quad (w_n,\tfrac{\mu}{d}),\quad
    (\tilde w_n,\tfrac{\mu}{d}),\quad n\in \{1,...,\k\},
$$
of the limit problem \eqref{4.2},  where, recalling \eqref{2.21}, we are denoting,
$$
  \tilde w_n(x):=\hat w_n(x+n^{-1}),\qquad x\in (0,1).
$$
\end{theorem}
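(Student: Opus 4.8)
The plan is to obtain the $2n+1$ coexistence states that, according to the statement, perturb from the limiting problem by a direct Implicit Function Theorem argument, and then to extract the remaining one from a fixed point index computation in the positive cone. Throughout, $\mu\in(\mu_\k,\mu_{\k+1})$ and $\l\in\mathscr{A}\cap(0,\tfrac{b\mu}{d})$ are fixed. By the discussion preceding the statement, every coexistence state $(w_*,\tfrac{\mu}{d})$ of \eqref{4.2} with $w_*\in\{w_0\}\cup\{w_m,\tilde w_m:1\le m\le\k\}$ is non-degenerate, so $D_{(w,v)}\ms{G}(0,\l,\mu,w_*,\tfrac{\mu}{d})$ is a Fredholm operator of index zero which --- $\l$ being chosen in $\mathscr{A}$ --- is injective (cf. \eqref{4.5}--\eqref{4.6} and Theorem \ref{th3.2}), hence a topological isomorphism of $\mc{C}_{N}^2[0,1]\times\mc{C}_{N}^2[0,1]$ onto $\mc{C}[0,1]\times\mc{C}[0,1]$. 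Since $\ms{G}$ is analytic near such states, the Implicit Function Theorem, applied with $\e$ as the parameter and $(\l,\mu)$ frozen, yields $\e_*>0$ and an analytic branch $\e\mapsto(w_*^\e,v_*^\e)$, with $(w_*^0,v_*^0)=(w_*,\tfrac{\mu}{d})$, solving $\ms{G}(\e,\l,\mu,w_*^\e,v_*^\e)=0$ for $|\e|<\e_*$; since $w_*\gg0$ and $\tfrac{\mu}{d}>0$, after shrinking $\e_*$ the pair $(w_*^\e,v_*^\e)$ is a genuine coexistence state of \eqref{4.1}. Taking $w_*=w_0$ already proves the first assertion. For $\l\in\mathscr{A}_n$, carrying this out for each of the $2n+1$ pairwise distinct coexistence states $(w_0,\tfrac{\mu}{d})$ and $(w_m,\tfrac{\mu}{d})$, $(\tilde w_m,\tfrac{\mu}{d})$ with $1\le m\le n$ of \eqref{4.2}, and letting $\e_0$ be the smallest of the corresponding $\e_*$'s, produces $2n+1$ pairwise distinct coexistence states of \eqref{4.1} for every $\e\in(0,\e_0)$, which is the final assertion.

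To produce the $(2n+2)$-th one I would argue by degree. Being non-degenerate, each of the above limiting coexistence states has fixed point index $(-1)^{\beta}$, where $\beta$ is the number of negative eigenvalues of its linearization; by the block-triangular structure of \eqref{4.5}, whose lower diagonal block $-D^2+\mu$ has the positive spectrum $\{(\ell\pi)^2+\mu\}_{\ell\ge0}$, $\beta$ coincides with the Morse index of $w_*$ as a solution of \eqref{2.2}. By \eqref{2.12}, $\ms{M}(w_0)=n+1$ throughout $\mathscr{A}_n$, whereas Theorem \ref{th3.2}(b),(c), together with $\mathscr{A}\cap\ms{S}_m=\varnothing$, give $\ms{M}(w_m)=\ms{M}(\tilde w_m)=m$ for $1\le m\le n$. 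Hence the indices of the $2n+1$ limiting coexistence states add up to $(-1)^{n+1}+2\sum_{m=1}^{n}(-1)^m=-1$ (the same value for both parities of $n$), and, by the stability of the index under small perturbations, so do the indices of the $2n+1$ perturbed coexistence states, for every $\e\in(0,\e_0)$. On the other hand, for each fixed $\e\in(0,\e_0)$ standard a priori $L^\infty$-estimates for \eqref{4.1} (the terms $-\e a(x)w^2$ and $-dv^2$ supplying the bounds) confine all non-negative solutions to $K\cap B_R$, where $K$ is the positive cone of $\mc{C}[0,1]\times\mc{C}[0,1]$ and $R=R(\e)$ is large, and the fixed point index of the associated compact solution operator over $K\cap B_R$ equals $1$; the non-negative solutions of \eqref{4.1} lying on the faces of $K$ are the trivial state (index $0$, since $\l,\mu>0$), the state $(0,\tfrac{\mu}{d})$ (index $1$, since the prey cannot invade it because $\sigma_1(-D^2+\tfrac{b\mu}{d}-\l)=\tfrac{b\mu}{d}-\l>0$), and, possibly, a positive semi-trivial prey state $(\bar w,0)$ (index $0$, since $\sigma_1(-D^2-\mu-\e c(x)\tfrac{\bar w}{1+\bar w})\le-\mu<0$ lets the predator invade). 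Additivity of the fixed point index then forces the coexistence states of \eqref{4.1} to have indices summing to $0$; since the $2n+1$ already found contribute $-1\ne0$, at least one further coexistence state must exist, and so \eqref{4.1} admits at least $2n+2$ coexistence states for every $\e\in(0,\e_0)$.

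The hard part will be the last paragraph: constructing the compact solution operator on the cone and establishing the a priori $L^\infty$-bounds for \eqref{4.1} with $\e>0$ (the bound on $w$ being the delicate point when $a$ vanishes somewhere), checking that the index of the operator over large balls equals $1$, and computing the indices of the semi-trivial states for the relevant values of $\e$ (in particular handling the possible degeneracy of the positive semi-trivial prey state). By contrast, the Implicit Function Theorem step and the bookkeeping of Morse indices through \eqref{2.12} and Theorem \ref{th3.2} are routine once the earlier results of the paper are in hand.
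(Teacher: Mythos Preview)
Your Implicit Function Theorem step is essentially identical to the paper's: both fix $\l\in\mathscr{A}$, observe that non-degeneracy makes $D_{(w,v)}\ms{G}(0,\l,\mu,w_*,\tfrac{\mu}{d})$ an isomorphism (Fredholm of index zero plus injectivity), and continue each of the $2n+1$ limiting coexistence states to a branch for small $\e$.

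The genuine difference is how you obtain the $(2n+2)$-th coexistence state. The paper does \emph{not} use a degree or fixed point index argument. Instead it invokes the a priori bounds of \cite{LGMH24} (Lemma~5.1 and Theorem~5.1 there) and argues, in one sentence, that the component $\mathscr{C}_\e$ of coexistence states of \eqref{4.1} containing the perturbation of the constant curve $\mathscr{C}_0$ must carry at least two coexistence states over each $\l\in\mathscr{A}$; one of these is the perturbation of $(w_0,\tfrac{\mu}{d})$ already counted, and the second is the extra one. The paper therefore relies on a global continuation/component picture and on results proved elsewhere, rather than on cone index bookkeeping.

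Your route via indices is a legitimate alternative and, in a sense, more self-contained and quantitative: your Morse index count ($\ms{M}(w_0)=n+1$ on $\mathscr{A}_n$ from \eqref{2.12}, $\ms{M}(w_m)=\ms{M}(\tilde w_m)=m$ from Theorem~\ref{th3.2}(b),(c)) is correct, and the resulting sum $(-1)^{n+1}+2\sum_{m=1}^n(-1)^m=-1$ is right for both parities. The trade-off is that you then owe the reader the standard but non-trivial machinery you list at the end (a priori bounds with $\e$-dependence when $a$ may vanish, the total cone index equal to $1$, and the semi-trivial index computations), whereas the paper simply cites \cite{LGMH24} for the bounds and appeals to a component argument. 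Either way the conclusion is the same; the paper's version is shorter but less explicit, yours is longer but more transparent about where the extra solution comes from.
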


\begin{proof}
Since $\l\in \ms{A}$, $(w_n,\tfrac{\mu}{d})$ is a non-degenerate coexistence state of \eqref{4.2}, whatever be the positive $(n,w_0)$-nodal solution $w_n$ of \eqref{1.iv}. The Fr\'{e}chet differential of $\ms{G}$ at
$(w_n,\tfrac{\mu}{d})$ is given by the linear operator
$$
    \ms{D}:=D_{(w,v)}\ms{G}(0,\l,\mu,w_n,\tfrac{\mu}{d}):\;\mc{C}^2_{N}[0,1]\times
    \mc{C}^2_{N}[0,1] \longrightarrow \mc{C}[0,1]\times \mc{C}[0,1],
$$
defined, for every $u$, $v\in\mc{C}^2_{N}[0,1]$,  by
\begin{equation*}
\ms{D}(w,v) :=\begin{pmatrix}
-w''-\l w+\frac{b\mu}{d}\frac{w}{(1+w_n)^2}+b\frac{w_n v}{1+w_n}\\[4pt]
-v''+\mu v
\end{pmatrix}.
\end{equation*}
Since $(w_n,\tfrac{\mu}{d})$ is non-degenerate,
$$
   N[\ms{D}]=\mathrm{span\,}[(0,0)],
$$
i.e., $\ms{D}$ is injective. As $\ms{D}$ is a Fredholm operator of index zero, by the Open Mapping Theorem, $\ms{D}$ is a topological isomorphism. Therefore, by the Implicit Function Theorem, there exist $\varepsilon_0>0$
and two analytic functions
$$
  W_n, V_n : (-\e_0,\e_0)\times (\l-\e_0,\l+\e_0)\times (\mu-\e_0,\mu+\e_0) \to \mc{C}^2_{N}[0,1]\times  \mc{C}^2_{N}[0,1]
$$
such that, for some $\varphi_n$, $\psi_n\in\mc{C}_{N}^2[0,1]$,
\begin{equation}
\label{4.7}
W_{n}(\e,\l,\mu)=w_n+\e \varphi_n+O(\varepsilon^2),\qquad V_n(\varepsilon,\l,\mu)=\tfrac{\mu}{d}+\e\psi_n +O(\varepsilon^2),
\end{equation}
and
$$
\ms{G}(\varepsilon,\l,\mu,W_{n}(\e,\l,\mu),V_n(\varepsilon,\l,\mu))=0
$$
for all
$$
   (\e,\l,\mu)\in (-\e_0,\e_0)\times (\l-\e_0,\l+\e_0)\times (\mu-\e_0,\mu+\e_0).
$$
Moreover, these are the unique solutions of \eqref{4.1} in a neighborhood of
$$
  (\e,\l,\mu,w,v)=(0,\l,\mu,w_n,\tfrac{\mu}{d}).
$$
Furthermore, since $w_n-w_0$ has $n$ simple zeroes in $(0,1)$, also $W_n-w_0$ has $n$ simple zeroes in
$(0,1)$.
\par
Summarizing, the Implicit Function theorem provides us with a total of $2n+1$ coexistence states of
\eqref{4.1} for sufficiently small $\e>0$ perturbing from the $2n+1$ non-degenerate coexistence states
of \eqref{4.2}: one of them constant, $(w_0,\frac{\mu}{d})$, $n$ of type $(w_j,\frac{\mu}{d})$, and $n$ of type $(\tilde w_j,\frac{\mu}{d})$, where $w_j$ and $\tilde w_j$ stand for the two positive $(j,w_0)$-nodal  solutions of \eqref{1.iv}, $j\in\{1,...,n\}$.
\par
Finally, note that, thanks to Lemma 5.1 and Theorem 5.1 of \cite{LGMH24}, the coexistence states
of \eqref{4.1} have a priori bounds for all $\e>0$. Thus, the component of the solution set of
\eqref{4.1} containing the local perturbation of the curve of
constant coexistence states $(\l,w_0,\frac{\mu}{d})$, $\ms{C}_0$, denoted by $\ms{C}_\e$, must contain at least two constant coexistence states for each $\l\in\ms{A}$.  This ends the proof.
\end{proof}

Next, we will determine the functions $\v_n$ and $\psi_n$ arising in the asymptotic expansions \eqref{4.7}.
Substituting $(W_n(\varepsilon,\l,\mu),V_n(\varepsilon,\l,\mu))$ in \eqref{4.1}, taking into account that
$(w_n,\frac{\mu}{d})$ solves \eqref{4.2}, dividing by $\e$ and letting $\e\to 0$, it becomes apparent that
$$
\left\{
\begin{array}{ll}
\Big(-D^2-\l+\tfrac{b\mu}{d}\tfrac{1}{(1+w_n)^2}\Big)\varphi_n=-a(x)w_n^2-b\tfrac{w_n}{1+w_n}\psi_n\varphi\quad&\hbox{in}\;\;(0,1),\\[7pt]
(-D^2+\mu)\psi_n=\tfrac{\mu}{d}\tfrac{c(x) w_n}{1+w_n}\varphi\quad&\hbox{in}\;\;(0,1),\\[7pt]
\varphi'_n(0)=\varphi'_n(1)=0,\quad
\psi'_n(0)=\psi'_n(1)=0.
\end{array}
\right.
$$
Since the lowest eigenvalue satisfies
$$
   \tau_0(\mu)=\mu>0,
$$
by applying Theorem 7.10 of \cite{LG13}, it follows from the $\psi_n$-equation that
$$
\psi_n=\tfrac{\mu}{d}(-D^2+\mu)^{-1}\Big(\tfrac{c(x)w_n}{1+w_n}\Big)\gg 0.
$$
Thus, $V_n(\varepsilon,\l,\mu)$ is inhomogeneous for all $n\geq1$.
\par
Lastly, since $\l\in \ms{A}$, Theorem \ref{th3.2} entails that
$$
     \tau_{n,\ell}(\l)=\tau_\ell\left(-\l+\tfrac{b\mu}{d}\tfrac{1}{(1+w_n)^2}\right)\neq 0\quad
\hbox{for all integer}\;\;  \ell\geq 0.
$$
Hence,  it follows from the $\v_n$-equation that
$$
\varphi_n=\Big(-D^2-\l+\tfrac{b\mu}{d}\tfrac{1}{(1+w_n)^2}\Big)^{-1}
\Big(-a(x)w_n^2-b\tfrac{w_n}{1+w_n}\psi_n\Big).
$$
\par
Adopting the notations introduced in Section \ref{sec2}, the Implicit Function Theorem can be used to show
that the compact arcs of the closed loops $\ms{C}_n$ whose
$\l$-projections are included in $\ms{A}$ provide us with compact arcs of curve filled in by coexistence states of \eqref{4.1}. However, this does not guarantee that the global structure of the solution set of the uncoupled system \eqref{4.2} will be maintained as $\e$ perturbs from zero. A sharper analysis of this particular issue will be carried out elsewhere.


\begin{thebibliography}{xx}

\bibitem{ALG} H. Amann and J. L\'opez-G\'omez, A priori bounds and multiple solutions for superlinear indefinite elliptic problems, \textit{J. Diff. Eqns.} \textbf{146}, 336--374, 1998.

\bibitem{BB86} J. Blat, K. J. Brown, Global bifurcation of positive solutions in some systems of elliptic equations, \textit{SIAM J. Math. Anal.}, \textbf{17}, 1339--1353, 1986.

\bibitem{CCLG} S. Cano-Casanova and J. L\'{o}pez-G\'{o}mez, Properties of the principal eigenvalues of a general class of non-classical mixed
boundary value problems, \textit{J. Diff. Eqns.} \textbf{178}, 123--211, 2002.

\bibitem{CEL} A. Casal, C. Eilbeck and J. L\'opez-G\'omez, Existence and uniqueness of coexistence states for a predator-prey model with diffusion, \textit{Diff. Int. Eqns.} \textbf{7}, 411--439, 1994.


\bibitem{CR71} M. G. Crandall and P. H. Rabinowitz, Bifurcation from simple eigenvalues, \textit{J. Funct. Anal.} \textbf{8}, 321--340, 1971.

\bibitem{CR73} M. G. Crandall and P. H. Rabinowitz, Bifurcation, perturbation of simple eigenvalues and linearized stability, \textit{Arch. Rat. Mech. Anal.} \textbf{52}, 161--180, 1973.

\bibitem{DLO} E. N. Dancer,  J. L\'opez-G\'omez and R. Ortega, On the spectrum of some linear noncooperative elliptic systems with radial symmetry, \textit{Diff. Int. Eqns.} \textbf{8}, 515--523, 1995.

\bibitem{DL98} Y. Du and Y. Lou, $S$-shaped global bifurcation curve and Hopf bifurcation of positive solutions to a predator-prey model, \textit{J. Diff. Eqns.}, \textbf{144}, 390--440, 1998.

\bibitem{DL01} Y. Du and Y. Lou, Qualitative behaviour of positive solutions of a predator-prey model: Effects of saturation, \textit{Proc. R. Soc. Edin., Sect. A} \textbf{131} (2), 321--649, 2001.

\bibitem{ELG87} J. Esquinas and J. L\'opez-G\'omez, Optimal results in local bifurcation theory, \textit{Bull. Aust. Math. Soc.}, \textbf{36}, 25--37, 1987.

\bibitem{Ka60} T. Kato, \textit{Perturbation Theory for Linear Operators}, Classics in Mathematics, Springer-Verlag, Berlin, 1995.

\bibitem{Kuto2004} K. Kuto, Stability of steady-state solutions to a prey-predator system with cross-diffusion,
\textit{J. Diff. Eqns.} \textbf{197}, 297--314, 2004.

\bibitem{KutoYamada2004} K. Kuto and Y. Yamada, Multiple coexistence states for a prey-predator system with cross-diffusion,
\textit{J. Diff. Eqns.} \textbf{197}, 315--348, 2004.


\bibitem{LG92} J. L\'{o}pez-G\'{o}mez, Positive periodic solutions of Lotka--Volterra reaction-diffusion systems, \textit{Diff. Int. Eqns.} \textbf{5}, 55--72, 1992.

\bibitem{LG94} J. L\'{o}pez-G\'{o}mez, Nonlinear eigenvalues and global bifurcation. Application to the search of positive solutions for general Lotka-Volterra Reaction-Diffusion systems with two species, \textit{Diff. Int. Eqns.} \textbf{7}, 1427--1452, 1994.

\bibitem{LG01} J. L\'opez-G\'omez, \textit{Spectral Theory and Nonlinear Functional Analysis}, Research Notes in Mathematics 426, Chapman \& Hall/CRC Press, Boca Raton, FL, 2001.


\bibitem{LG13} J. L\'opez-G\'omez, \textit{Linear Second Order Elliptic Operators}, World Scientific, Singapore, 2013.

\bibitem{LG15} J. L\'{o}pez-G\'{o}mez, \textit{Metasolutions of Parabolic Equations in Population Dynamics}, CRC Press, Boca Raton, 2015.

\bibitem{LGMM} J. L\'opez-G\'omez and M. Molina-Meyer, The maximum principle for cooperative weakly elliptic systems and some applications, \textit{Diff. Int. Eqns} \textbf{7}, 383--398, 1994.

\bibitem{LGMC05} J. L\'opez-G\'omez and C. Mora-Corral, Counting zeros of $\mc{C}^1$
Fredholm maps of index 1, \textit{Bull. Lond. Math. Soc.} \textbf{37} (5), 778--792, 2005.

\bibitem{LGMC07} J. L\'opez-G\'omez and C. Mora-Corral, \textit{Algebraic multiplicity of eigenvalues of linear operators}, Operator Theory: Advances and Applications 177. Basel: Birkhauser, 2007.

\bibitem{LGMH20} J. L\'{o}pez-G\'{o}mez and E. Mu\~{n}oz-Hern\'{a}ndez, A spatially heterogeneous predator-prey model, \textit{DCDS-B}, \textbf{26} (4), 2085--2113, 2020.

\bibitem{LGMH24} J. L\'{o}pez-G\'{o}mez and E. Mu\~{n}oz-Hern\'{a}ndez, A robust multiplicity result in a generalized predator-prey model, \textit{Advances in Differential Equations}, \textbf{29} (5-6), 437--476, 2024.

\bibitem{LP92} J. L\'opez-G\'omez and R. M. Pardo, Coexistence regions in Lotka-Volterra systems with
diffusion, \textit{Nonl. Anal.} \textbf{19}, 11--28, 1992.

\bibitem{LP93} J. L\'opez-G\'omez and R. M. Pardo, The existence and the uniqueness for the predator-prey model with diffusion, \textit{Diff. Int. Eqns.} \textbf{6}, 1025--1031, 1993.

\bibitem{LP98} J. L\'opez-G\'omez and R. M. Pardo, Invertibility of linear non-cooperative elliptic systems, \textit{Nonlinear Analysis TMA} \textbf{31}, 687--699, 1998.

\bibitem{O61} Z. Opial, Sur les p\'eriodes des solutions de l'\'equation dif\'erentielle $x''+g(x)=0$, \textit{Annales Polonici Mathematici}, \textbf{10}, 49--72, 1961.

\bibitem{R71} P. H. Rabinowitz, Some global results for nonlinear eigenvalue problems, \textit{J. Funct. Anal.} \textbf{7}, 487--513, 1971.

\bibitem{S90} R. Schaaf, \textit{Global Solutions Branches of Two Point Boundary Value Problems}, Lecture Notes in Mathematics, Springer-Verlag, Berlin, 1990.
\end{thebibliography}
\end{document}